\newtheorem{thm}{Theorem}[section]
\newtheorem{lem}[thm]{Lemma}
\theoremstyle{definition}
\newtheorem{defn}[thm]{Definition}
\numberwithin{equation}{section}
\DeclarePairedDelimiter{\norm}{\lVert}{\rVert}
\DeclarePairedDelimiter{\interval}{\langle}{\rangle}
\newcommand{\va}{\operatorname{vec}}
\newcommand{\mi}{\operatorname{mid}}
\newcommand{\ra}{\operatorname{rad}}
\newcommand{\di}{\operatorname{diag}}
\newcommand{\relerr}{\operatorname{relerr}}
\newcommand{\rp}{\operatorname{rp}}
\newcommand{\garp}{\ensuremath{\texttt{garp}}}
\newcommand{\nre}{\ensuremath{\texttt{nre}}}
\newcommand{\A}{\mathbf{A}}
\newcommand{\B}{\mathbf{B}}
\newcommand{\CC}{\mathbf{C}}
\newcommand{\Z}{\mathbf{Z}}
\newcommand{\R}{\mathbb{R}}
\newcommand{\x}{\mathbf{x}}
\newcommand{\X}{\mathbf{X}}
\newcommand{\Y}{\mathbf{Y}}
\title[Verified CARE solutions]{Methods for verified stabilizing solutions to continuous-time algebraic Riccati equations}
\author{Tayyebe Haqiri}
\address{Department of Applied Mathematics, Faculty of Mathematics and Computer\\
Shahid Bahonar University of Kerman, Kerman, Iran\\
\newline Member of Young Researchers Society of Shahid Bahonar University of Kerman\\
Kerman, Iran,\\
\texttt{Haqiri@math.uk.ac.ir, thaqiri@gmail.com}}
\author{Federico Poloni}
\address{Dipartimento di Informatica, Universit\`a di Pisa,\\
Largo B. Pontecorvo 3, 56127 Pisa, Italy\\
\texttt{fpoloni@di.unipi.it}}
\begin{document}
\begin{abstract}
We describe a procedure based on the Krawczyk method to compute a verified enclosure for the stabilizing solution of a continuous-time algebraic Riccati equation $A^*X+XA+Q=XGX$ building on the work of [B.~Hashemi, \emph{SCAN} 2012] and adding several modifications to the Krawczyk procedure. We show that after these improvements the Krawczyk method reaches results comparable with the current state-of-the-art algorithm [Miyajima, \emph{Jpn. J. Ind. Appl. Math} 2015], and surpasses it in some examples.
Moreover, we introduce a new direct method for verification which has a cubic complexity in term of the dimension of $X$, employing a fixed-point formulation of the equation inspired by the ADI procedure. The resulting methods are tested on a number of standard benchmark examples.
\end{abstract}

\maketitle

\subjclass[2010]{65M32, 35Kxx, 65T60.}
\keywords{Algebraic Riccati equation, stabilizing solution, interval arithmetic, verified computation, Krawczyk's method.}
\maketitle


\section{Introduction}\label{introduction}
Consider the continuous-time algebraic Riccati equation \emph{CARE} 
\begin{equation}\label{care}
A^*X+XA+Q = XGX, 
\end{equation}
where $ A, G$ and $Q \in \mathbb{C}^{n \times n}$ are given, $G$ and $Q$ are Hermitian, and $X \in \mathbb{C}^{n \times n}$ is unknown. Here, the notation $A^*$ denotes the conjugate transpose of a complex matrix $A$ while $A^T$ shows the transpose of $A$. CAREs have a variety of applications in the field of control theory and filter design, such as the linear-quadratic optimal control problem and Hamiltonian systems of differential equations~\cite{LancasterRodman1995}. A solution $X_s$ of~\eqref{care} is called \emph{stabilizing} if the \emph{closed loop matrix} $A-GX_s$ is \emph{Hurwitz stable}, i.e., if all its eigenvalues have strictly negative real part. If a stabilizing solution $X_s$ exists, it is unique~\cite[Theorem~2.17]{BiniIannazzoMeini2012}, and it is Hermitian, i.e., $X_s=(X_s)^*$. The unique stabilizing solution is the one of interest in almost all applications~\cite{LancasterRodman1995,BiniIannazzoMeini2012}, hence in this paper we focus on its computation. The techniques presented here can be adapted with minor sign changes to \emph{anti-stabilizing} solutions, i.e., solutions $X_{as}$ for which all the eigenvalues of $A-GX_{as}$ have positive real part. The algorithms in~\cite{Hashemi2012} and~\cite{Miyajima2015}, in contrast, do not restrict to verifying stabilizing solutions only; however, solutions which are neither stabilizing nor anti-stabilizing have very few applicative uses.

The solutions of~\eqref{care} can be put in one-to-one correspondence with certain invariant
subspaces of the \emph{Hamiltonian}
\[ H := \begin{bmatrix}
	A & -G\\ -Q & -A^*
\end{bmatrix} \in \mathbb{C}^{2n \times 2n}. \]
 Indeed, $X$ is a solution of~\eqref{care} if and only if
\begin{equation} \label{invsubformulation}
H\begin{bmatrix}
I_n \\ X
\end{bmatrix}
=\begin{bmatrix}
I_n \\ X
\end{bmatrix}
(A-GX),
\end{equation}
 in which $I_n$ is the identity matrix in $\mathbb{C}^{n \times n}$. In particular, the columns of the matrix $\begin{bsmallmatrix}
I_n \\ X
\end{bsmallmatrix}$ 
span an invariant subspace for the matrix $H$, and the eigenvalues of $A-GX$ are a subset of the eigenvalues of $H$~\cite{BiniIannazzoMeini2012}. We refer the reader to the books by Lancaster and Rodman~\cite{LancasterRodman1995} and by Bini, Iannazzo and Meini~\cite{BiniIannazzoMeini2012} for details concerning main theoretical properties and numerical solutions together with the description of the main tools for the design and analysis of solution algorithms.

The work presented in this paper addresses the problem of computing verified stabilizing solutions of CAREs~\eqref{care}, that is, determining an interval matrix which is guaranteed to contain the stabilizing solution of the CARE. The problem of computing verified solutions to the 
matrix Riccati equations (AREs) has been addressed before in the literature: the algorithms in~\cite{LutherOttenTraczinski1998} and~\cite{LutherOtten1999}, based on the interval Newton method, are pioneering works in this context but their computational complexity is $\mathcal{O}(n^{6})$. In~\cite{LutherOttenTraczinski1998}, the authors apply Brouwer's fixed point theorem to calculate verified solutions of the ARE 
\begin{equation} \label{ARE} 
A^TX+XA+Q=XBR^{-1}B^TX, 
\end{equation}
with real symmetric matrices $Q$ and $R$, $Q$ positive semi definite and $R$ positive definite. They find an interval matrix including a positive definite solution of~\eqref{ARE}. The paper~\cite{YanoKoga2011} decreases this cost to $\mathcal{O}(n^{5})$ by using the Krawczyk method, which is a variant of the Newton method that does not require the inversion of an interval matrix. A major improvement is the algorithm in~\cite{Hashemi2012}, which is applicable when the closed-loop matrix $A-G\check{X}$ is diagonalizable where $\check{X}$ denotes a numerical computed solution of \eqref{care}, and requires only $\mathcal{O}(n^{3})$ operations. The recent paper~\cite{Miyajima2015} describes a more efficient algorithm based again on the diagonalization of $A-G\check{X},\check{X}$ a Hermitian numerical solution of~\eqref{care}. The resulting method has cubic complexity as well. An important feature of this algorithm is that does not require iteration to find a suitable candidate interval solution, unlike the previous methods based on the Krawczyk method and fixed-point theorems. Hence it is typically faster than the alternatives. The same paper~\cite{Miyajima2015} also includes a method to verify the uniqueness and the stabilizing property of the computed solution.

We propose here a variant of the Krawczyk method suggested in~\cite{Hashemi2012}, introducing several modifications. Namely:
\begin{itemize}
	\item We use the technique introduced in~\cite{Frommer2009}, which consists in applying the Krawczyk method not to the original equation, but to one obtained after a change of basis, in order to reduce the number of verified operations required, with the aim to reduce the wrapping effects.
	\item We exploit the invariant subspace formulation~\eqref{invsubformulation} to make another change of basis in the matrix $H$, following a technique introduced in~\cite{MehrmannPoloni2012} for the \emph{non-verified} solution of Riccati equations. This technique employs suitable permutations of $H$ to reduce~\eqref{care} to a different CARE whose stabilizing solution $Y_s$ has bounded norm. Up to our knowledge, this is the first attempt to use this technique in the context of interval arithmetic and verified computation.
	\item When applying the Krawczyk method, an enclosure for the so-called \emph{slope} matrix is needed; the standard choice to compute it is using the interval evaluation of the Jacobian of the function at hand. Instead, we use a different algebraic expression which results in a smaller interval.
\end{itemize}
An algorithm to verify the stabilizing property (and thus the uniqueness) of the solution enclosure in the computed interval matrix is also presented.

In addition, we present a different algorithm, based on a reformulation of~\eqref{care} as a fixed-point equation, which requires $\mathcal{O}(n^3)$ operations per step and does not require the diagonalizability of the closed-loop matrix $A-G\check{X}$ in which $\check{X}$ is the computed approximate stabilizing solution of CARE~\eqref{care}. This algorithm is generally less reliable than the Krawczyk-based ones, but it has the advantage of not breaking down in cases in which the closed-loop matrix is defective or almost defective.

We conclude the paper by evaluating the proposed algorithms on a large set of standard benchmark problems~\cite{carex1995,ChuLiuMehrmann2007} for Riccati equations, comparing them with the algorithms in~\cite{Hashemi2012} and~\cite{Miyajima2015}. Using all the improvements described here, the gap between the Krawczyk method and the current best method in~\cite{Miyajima2015} is essentially eliminated. The four methods each handle satisfactorily a slightly different set of problems, and none of them is beaten by the alternatives in all possible experiments.

The paper is organized as follows. In the next section we introduce some notation and standard results in linear algebra and interval analysis which are at the basis of our methods. In section~\ref{findingenclosures} we discuss various algorithms based on the Krawczyk method to compute a thin interval matrix enclosing a solution of~\eqref{care} while in section~\ref{directfixedpoint}, a fixed point approach is presented. In section~\ref{numerical} and~\ref{summary} we perform some numerical tests and draw the conclusions and outlook, respectively.


\section{Preliminaries and Notation}\label{notation}
We try to follow the standard notation of interval analysis defined in~\cite{Kearfott2005}. Subsequently, we use boldface lower and upper case letters for interval scalars or vectors and matrices, respectively, whereas lower case stands for scalar quantities and point vectors and upper case represents matrices. By $\mathbb{C}^{m\times n}(\R^{m\times n})$ and $\mathbb{IC}_{\text {disc}}^{m\times n} (\mathbb{I}\R^{m\times n})$ we denote the sets of all complex (real) $m\times n$ matrices and the set of all complex (real) $m\times n$ interval matrices, respectively. By a complex (real) interval matrix, we mean a matrix whose entries are circular complex (compact real) intervals.

The \emph {Kronecker product} $A\otimes B$ of an $m\times n$ matrix $A = (A_{ij})$ and a $p\times q$ matrix $B$ is an $mp\times nq$ matrix defined as the block matrix whose blocks are $A\otimes B:=[A_{ij}B]$. For a point matrix $A \in \mathbb{C}^{m\times n}$, the vector $\va (A) \in \mathbb{C}^{mn} $ denotes column-wise vectorization whereby the successive columns of $A$ are stacked one below the other, beginning with the first column and ending with the last. Moreover, $\overline{A}$ denotes the complex conjugate of $A$ and if $A$ is an invertible matrix, then $A^{-T}:=(A^{T})^{-1}$ and $A^{-*}:=(A^{*})^{-1}$. The element-wise division of a matrix $A = (A_{ij})\in \mathbb{C}^{m\times n}$ by a matrix $B = (B_{ij})\in\mathbb{C}^{m\times n}$, also known as the \emph {Hadamard} division, denoted by $A./ B$, results in an $m\times n$ matrix $C= (C_{ij})$ whose $(i,j)$-th element is given by $C_{ij}=A_{ij}/B_{ij}$ provided that $B_{ij} \neq 0,$ for each $1 \leq i \leq m$ and $1 \leq j \leq n$. For a given vector $d=(d_1,d_2,\dotsc,d_n)^T\in \mathbb{C}^{n}$, $\di(d)\in \mathbb{C}^{n\times n}$ is the diagonal matrix whose $i,i$ entry is $d_{i}$. Conversely, given a diagonal matrix $D$, $\di(D)$ is the vector whose elements are the diagonal entries of $D$. Most of these notions and operations are analogously defined for interval quantities.

Complex intervals can be defined either as rectangles or as discs. We use here the definition as discs: a circular complex interval $\x$, or circular disc or simply a complex interval, is a closed circular disc of radius $\ra {(\x)} \in \mathbb{R}$ with $\ra{(\x)} \geq 0$ and center $\mi{(\x)} \in \mathbb{C}$. Indeed, it is defined as $\x:= \{z \in \mathbb {C}\colon|z-\mi{(\x)}| \leq \ra{(\x)}\}= \left\langle \mi{(\x)}, \ra{(\x)}\right\rangle$.
The operations on the circular complex intervals, $\mathbb{IC}_\text{disc}$, are introduced as generalizations of operations on complex numbers. Then, the standard arithmetic for circular complex interval arguments $\x=\left\langle \mi{(\x)}, \ra{(\x)}\right\rangle$ and $\mathbf{y}=\left\langle \mi{(\mathbf{y})}, \ra{(\mathbf{y})}\right\rangle$ is defined as~\cite{Alefeld1983}:
\[\x \pm \mathbf{y}:=\left\langle\mi{(\x)}\pm \mi{(\mathbf{y})},\ra{(\x)}+\ra{(\mathbf{y})}\right\rangle,\]
\[\x \cdot \mathbf{y}:=\left\langle\mi{(\x)}\mi{(\mathbf{y})},|\mi{(\x)}|\ra{(\mathbf{y})}+|\mi{(\mathbf{y})}|\ra{(\x)}+\ra{(\x)}\ra{(\mathbf{y})}\right\rangle,\]
\[{1}/{\x}:=\left\langle {\overline{\mi{(\x)}}}/{|\mi{(\x)}|^2-(\ra{(\x))^2}},{\ra (\x)}/{|\mi{(\x)}|^2-(\ra{(\x))^2}}\right\rangle,\quad 0 \notin \x,\]
\[{\x}/{\mathbf{y}}:=\x \cdot {1}/{\mathbf{y}},\quad 0\notin\mathbf{y}.\]
Here $|z|$ denotes the absolute value of a complex number $z$. Additionally, operations between a complex interval and a complex number $z \in \mathbb{C}$ can be performed by identifying $z$ with $\interval{z,0} \in \mathbb{IC}_{\text{disc}}$ which can be then extended to other point and interval quantities. Besides, $\overline{z}$ is the complex conjugate of $z$.\\
We emphasize that the definitions of addition, subtraction and inversion substantially coincide with their set theoretic definitions, i.e.,
\[\x \pm \mathbf{y} = \{x \pm y \colon x \in \x, y \in \mathbf{y}\},\]
\[ {1}/{\x} = \{{1}/{x} \colon x \in \x \}.\]
Unfortunately, the set $\{xy \colon x \in \x, y \in \mathbf{y}\}$, in general, is not a disc but one of the basic properties of interval arithmetic, which makes its use well-founded, is that respects inclusion: for \emph{all} the four basic arithmetic operations $\circ \in \{+,-,\cdot,/\}$ one has 
\[\x \circ \mathbf{y} \supseteq \{x \circ y~:~ x \in \x,~y \in \mathbf{y}\},\]
in which $\x$ and $\mathbf{y}$  are two real or circular complex intervals. (In the case of division, we need to assume that $0 \notin \mathbf{y}$ for the operation to be well-defined.) Hence we have the following \emph{inclusion property}: if $\mathbf{f}$ is the \emph{interval evaluation} of $f$ (an ordinary real or complex  function of $N$ variables), then
\begin{multline*}
 f(\x_1,\x_2,\dotsc,\x_N) := \{f (x_1,x_2,\dotsc,x_N): x_1\in\x_1, x_2\in\x_2,\dots, x_N\in\x_N\}\\ \subseteq \mathbf{f}(\x_1,\x_2,\dotsc,\x_N).
\end{multline*}
The interval evaluation~\cite[e.g.]{Moore2009} of a function $f(x_1,x_2,\dots,x_N)$, $\mathbf{f}(\x_1,\x_2,\dotsc,\x_N)$, defined by a formula is obtained by replacing (1) the variables $x_1,x_2,\dots,x_N$ with interval variables $\x_1,\x_2,\dots,\x_N$ and (2) each arithmetic operation with the corresponding interval operation. We will utilize the same approach to define the interval evaluation of a matrix function as well. Note that, in principle, different equivalent formulas could give different interval evaluations; indeed, the process of turning the customary arithmetic into interval arithmetic is not free of pitfalls; issues such as \emph{interval dependency} and the \emph{wrapping phenomenon} have to be considered carefully. We refer the reader to the review article~\cite{Rump2010} for a thorough introduction.

The intersection of two complex intervals $\x$ and $\mathbf{y}$ is not always a complex interval so we may define it to be any complex interval $\mathbf{z}$ such that $\x \cap \mathbf{y} \subseteq \mathbf{z}$, if they are not disjoint. If they are disjoint, the intersection is the empty set. The function $\texttt{intersect.m}$ in INTLAB can be used to obtain a tight intersect. The interval hull of two intervals $\x$ and $\mathbf{y}$ is denoted by $\square {(\x,\mathbf{y})}$ which is the smallest interval containing $\x$ and $\mathbf{y}.$ The magnitude of $\x \in \mathbb{IC}_\text{disc}$ is defined as $\operatorname{mag}(\x):=\operatorname{max} \{|x|:x \in \x\}$.

We denote by $\A = \interval{\mi(\A),\ra(\A)} \in \mathbb{IC}^{m\times n}_{\text{disc}}$ the $m \times n$ interval matrix $\A$ whose $(i,j)$ element is the complex interval $\interval{\mi(\A_{ij}),\ra(\A_{ij})}$, with $\ra(\A_{ij}) \geq 0$; $1 \leq i \leq m, 1 \leq j \leq n$. For interval vectors and matrices, $\mi, \ra, \operatorname{mag},$ intersect and $\square$ will be applied component-wise.

The \emph{Frobenius norm} of a complex matrix $A=(A_{ij})$ is defined as $\norm{A}_{F} := (\sum_{i,j} A_{ij}^2)^{1/2}$. This definition can be extended to complex interval matrices, providing an interval-valued function $\norm{\A}_{F}$ defined as the smallest interval containing $\{x : x  = \norm{A}_{F}, A \in \A \}$.

The definition of inverse of an interval matrix may be problematic in general, but if $\mathbf{D}= \di ( \mathbf{d})$ is a diagonal interval matrix, with $\mathbf{d} = (\mathbf{d}_1,\mathbf{d}_2,\dots,\mathbf{d}_N)^T$ and $0\not\in \mathbf{d}_i$ for each $i=1,2,\dots,N$, then we may define $\mathbf{D}^{-1} := \di ((\mathbf{d}_1^{-1}, \mathbf{d}_2^{-1},\dots,\mathbf{d}_N^{-1})^T)$.\\

The following lemmas contain simple arithmetical properties of the Kronecker products which we will use in the following. Most of them appear also e.g. in~\cite{Frommer2009} or~\cite{Horn1994}.
\begin{lem} \label{kroneckervec}
Assume that $A=(A_{ij})$, $B=(B_{ij})$, $C=(C_{ij})$ and $D=(D_{ij})$ be complex matrices with compatible sizes. Then,
\begin{enumerate}
	\item $(A\otimes B)(C\otimes D)=AC\otimes BD$,
	\item $A\otimes (B+C)=(A\otimes B)+(A\otimes C)$,
	\item $(A\otimes B)^*=A^*\otimes B^*$,
	\item $(A\otimes B)^{-1}=A^{-1}\otimes B^{-1}$, if $A$ and $B$ are invertible,
	\item $\va(ABC)=(C^T\otimes A) \va {(B)}$,\label{five}
	\item $(\di(\va {(A)}))^{-1} \va {(B)} = \va {(B ./ A)}$, if $A_{ij}\neq 0$ for each $i,j$.\label{vecHadamard}
\end{enumerate}
\end{lem}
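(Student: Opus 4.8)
The plan is to establish each of the six identities directly from the definitions of the Kronecker product and of $\va$, ordering them so that the later items reuse the earlier ones; the whole argument is elementary block- and index-chasing, with the only genuinely delicate points being the placement of the transpose in item~(5) and the index bookkeeping in item~(6).

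First I would prove the mixed-product rule~(1) block-wise. Writing the $(i,j)$ block of $A\otimes B$ as $A_{ij}B$ and likewise for $C\otimes D$, the $(i,k)$ block of the product is $\sum_j (A_{ij}B)(C_{jk}D)=\bigl(\sum_j A_{ij}C_{jk}\bigr)BD=(AC)_{ik}\,BD$, which is precisely the $(i,k)$ block of $AC\otimes BD$. Item~(2) is immediate, since the $(i,j)$ block of $A\otimes(B+C)$ is $A_{ij}(B+C)=A_{ij}B+A_{ij}C$. For~(3) I would observe that taking the conjugate transpose moves the $(i,j)$ block $A_{ij}B$ into the $(j,i)$ position as $\overline{A_{ij}}\,B^*=(A^*)_{ji}B^*$, the $(j,i)$ block of $A^*\otimes B^*$. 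Item~(4) then follows from~(1) with no further computation: $(A\otimes B)(A^{-1}\otimes B^{-1})=AA^{-1}\otimes BB^{-1}=I\otimes I$, and the product in the other order is $I$ as well, so $A^{-1}\otimes B^{-1}$ is the two-sided inverse.

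For~(5) I would first record the rank-one building block $\va(uv^T)=v\otimes u$ for column vectors $u,v$: the $s$-th column of $uv^T$ is $v_s u$, so stacking the columns yields $v\otimes u$. Taking $B=uv^T$ gives $ABC=(Au)(v^TC)=(Au)(C^Tv)^T$, whence $\va(ABC)=(C^Tv)\otimes(Au)=(C^T\otimes A)(v\otimes u)=(C^T\otimes A)\va(B)$, where the middle equality is exactly item~(1). Since both sides of~(5) are linear in $B$ and every matrix is a sum of rank-one matrices $B_{rs}\,e_re_s^T$, linearity extends the identity to all $B$. The point requiring attention is that it is the \emph{transpose} $C^T$ (not $C$ or $\overline{C}$) that appears, which is forced by $v^TC=(C^Tv)^T$.

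Finally, item~(6) is a statement about how $\va$ interacts with the diagonal operator. Since $\di(\va(A))$ is diagonal with diagonal entries $\va(A)$, and all of these are nonzero by hypothesis, its inverse is the diagonal matrix with reciprocal entries; hence the $k$-th entry of $(\di(\va(A)))^{-1}\va(B)$ is $\va(B)_k/\va(A)_k$. Because $\va$ uses one fixed enumeration of the index pairs, the index $k$ corresponds to the same matrix position $(r,s)$ in both $A$ and $B$, so this quotient is $B_{rs}/A_{rs}=(B./A)_{rs}$, the $k$-th entry of $\va(B./A)$. I do not anticipate any real obstacle here: the only care needed throughout is consistent index bookkeeping, and none of the six parts requires more than the definitions together with part~(1).
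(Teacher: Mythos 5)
Your proof is correct in all six parts. Note that the paper itself offers no proof of this lemma: it is stated as a collection of standard facts with a pointer to the literature (Horn and Johnson's book and the paper of Frommer and Hashemi), so there is no argument in the paper to compare against. What you supply is the standard textbook derivation, and it is sound: the block computation for the mixed-product rule~(1), with (2)--(4) following immediately; the rank-one reduction $\va(uv^T)=v\otimes u$ plus linearity for~(5), where you correctly identify that $v^TC=(C^Tv)^T$ is what forces the transpose $C^T$ rather than $C$ or $\overline{C}$ (a point that matters in this paper, since the complex case is used throughout and one could easily slip in a conjugate); and the entry-wise bookkeeping for~(6), where the only substantive point is that the column-major enumeration used by $\va$ pairs the same position $(r,s)$ in $A$ and $B$, which you state explicitly. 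Your argument is self-contained and fills in exactly what the paper delegates to its references.
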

\begin{lem} \label{kroneckervecint}
Let $\A=(\A_{ij})$, $\B=(\B_{ij})$ and $\CC=(\CC_{ij})$ be complex interval matrices of compatible sizes. Then,
\begin{enumerate}
	\item $\Bigl\{(C^T\otimes A) \va {(B)} \colon A \in \A,~B \in \B,~C \in \CC\Bigr\} \subseteq \begin{cases}
\va\Bigl(\A(\B\CC)\Bigr),\\
\va\Bigl((\A\B)\CC\Bigr),
\end{cases}$\label{vecmultiproduct}
	\item $\Bigl(\di(\va {(\A)})\Bigr)^{-1} \va {(\B)}= \va ({\B ./ \A})$, if $0\notin \A_{ij}$ for all $i,j$.\label{vecHadamarhint}
\end{enumerate}
\end{lem}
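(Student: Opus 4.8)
The plan is to deduce both statements from the corresponding pointwise identities in Lemma~\ref{kroneckervec} together with the inclusion property of interval arithmetic, since no genuinely new phenomenon arises beyond organizing these two ingredients correctly.

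For part~\eqref{vecmultiproduct}, I would start from Lemma~\ref{kroneckervec}\eqref{five}, which gives $(C^T\otimes A)\va(B)=\va(ABC)$ for every choice of point matrices $A\in\A$, $B\in\B$, $C\in\CC$. Hence the set on the left-hand side is exactly $\{\va(ABC)\colon A\in\A,\,B\in\B,\,C\in\CC\}$. Since $\va$ is merely a rearrangement of the entries (a fixed linear bijection between $\mathbb{C}^{m\times n}$ and $\mathbb{C}^{mn}$), it commutes with set formation, so it suffices to show the two matrix inclusions $\{ABC\colon A\in\A,\,B\in\B,\,C\in\CC\}\subseteq\A(\B\CC)$ and $\subseteq(\A\B)\CC$, and then apply $\va$ to both sides.

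I would establish each matrix inclusion by applying the inclusion property twice, exploiting that matrix multiplication is built out of the scalar operations $+$ and $\cdot$ for which inclusion holds. For the first bound, fix $B\in\B$ and $C\in\CC$; the inclusion property applied to the product $BC$ yields $BC\in\B\CC$, and then, applied to the product of $A\in\A$ with the matrix $BC\in\B\CC$, it gives $A(BC)\in\A(\B\CC)$. Because $A(BC)=ABC$ for point matrices, this proves the first inclusion; the second follows symmetrically by writing $ABC=(AB)C$ and bounding $AB\in\A\B$ first. The only point to be careful about is that one may claim inclusion only, not equality: the two interval evaluations $\A(\B\CC)$ and $(\A\B)\CC$ generally differ because of the wrapping phenomenon, and neither need equal the exact image set, but associativity for \emph{point} matrices guarantees that both contain it.

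For part~\eqref{vecHadamarhint}, I would instead argue by a direct entrywise computation, which is what makes the stronger conclusion of \emph{equality} attainable. Under the hypothesis $0\notin\A_{ij}$, the diagonal interval matrix $\di(\va(\A))$ has all its diagonal entries nonzero, so by the convention fixed for inverses of diagonal interval matrices its inverse is the diagonal matrix whose entries are the reciprocals $\A_{ij}^{-1}$, arranged column-wise. Multiplying this diagonal matrix by $\va(\B)$ scales the $k$-th component by the $k$-th reciprocal, producing the vector with entries $\A_{ij}^{-1}\,\B_{ij}$ in column-wise order. Since scalar circular-interval multiplication is commutative, $\A_{ij}^{-1}\B_{ij}=\B_{ij}\cdot(1/\A_{ij})=\B_{ij}/\A_{ij}$, which is precisely the $(i,j)$ entry of $\B./\A$; stacking these column-wise is by definition $\va(\B./\A)$. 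The reason equality holds here, unlike in part~\eqref{vecmultiproduct}, is that each output component is a single division involving exactly one entry of $\A$ and one entry of $\B$, so no interval variable is reused and no overestimation is introduced; the only (minor) obstacle is keeping the column-wise indexing between $\di(\va(\cdot))$ and $./$ consistent throughout.
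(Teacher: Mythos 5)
Your proof is correct. The paper actually states Lemma~\ref{kroneckervecint} without proof, treating it as a standard consequence of Lemma~\ref{kroneckervec} and the inclusion property of interval arithmetic, and your argument—reducing part one to the point identity $\va(ABC)=(C^T\otimes A)\va(B)$ plus two applications of the inclusion property, and proving part two by entrywise computation using the paper's convention for inverting diagonal interval matrices and the commutativity of circular-interval multiplication—is exactly the intended reasoning, including the correct observation that only inclusion (not equality) can be claimed in part one while genuine equality holds in part two.
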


The CPU rounding mode called \emph{round to nearest} is typically used when we compute using a computer. Two other rounding modes are \emph{round downward} (round towards the largest floating point number smaller than the true result), and \emph{round upward} (round towards the smallest floating point number larger than the true result)~\cite{IEEE1985}. In general, it is impossible to compute the exact solution in a numerical computation using floating point numbers. Then, a numerical computation with guaranteed accuracy switches the CPU rounding mode, computes a lower and an upper bound to the true solution, and creates an interval which is guaranteed to contain it~\cite{Hargreaves2002}.
One example of software which provides a fast implementation of such a reliable interval arithmetic is the MATLAB toolbox INTLAB~\cite{Intlab1999}; older versions of INTLAB are freely available for noncommercial use. The default arithmetic for both real and complex intervals in INTLAB is the midpoint-radius arithmetic~\cite{Rump2010}.


\section{Modified Krawczyk's methods} \label{findingenclosures}%

Enclosure methods using interval arithmetic work this way: let $g:\mathbb {C}^N\to \mathbb {C}^N$ be some function of which we wish to find a zero. First find a function $h:\mathbb {C}^N\to \mathbb {C}^N$ whose fixed points are known to be the zeros of $g$. Assume that $h$ is continuous and that we know an enclosure function $ \mathbf{h}$ for $h$, i.e., a function based on `correct' interval arithmetic which gives an interval vector $\mathbf{h} (\x)$ containing the range of $h$ over a given interval vector $\x$. Then if $\mathbf{h} (\x)\subseteq \x$ we know that $h(\x)\subseteq \x$ and so $h$ has a fixed point in $\x$ by Brouwer's theorem~\cite{Frommer2001}. 

In this paper, often the functions $\mathbf{h}$ are variants of the Krawczyk operator. To define this operator, we first need the concept of a slope.
\begin{defn}\label{slope}~\cite[e.g.]{Moore2009}
Suppose $f:\psi \subseteq \mathbb{C}^{N}\to \mathbb{C}^{N}$ and $x, y\in  \mathbb{C}^N$. Then, a \emph{slope} $S(f;x,y)$ is a mapping from the Cartesian product $\psi \times \psi$ to $\mathbb{C}^{N \times N}$ such that
\[f(y)- f(x)=S(f;x,y)(y-x).\]
\end{defn}

We are now ready to state the result which is at the basis of all the modified Krawczyk-type algorithms used in the rest of our paper.
\begin{thm}\label{modifiedkrawczyk}
~\cite{Frommer2014} Assume that $f:\psi \subset \mathbb{C}^N\to \mathbb{C}^N$ is continuous. Let ${\check{x}}\in \psi$ and $\mathbf{z}\in \mathbb{IC}_{\text {disc}}^N$ be such that $\check{x}+\mathbf{z}\subset \psi$. Moreover, assume that $\mathcal{S}\subset \mathbb{C}^{N\times N}$ is a set of matrices such that $S(f;\check{x},x') \in \mathcal{S}$ for every $x'\in \check{x}+\mathbf{z}=:\x$. Finally, let $R\in \mathbb{C}^{N\times N}$. Denote by $\mathcal{K}_f(\check{x},R,\mathbf{z},\mathcal{S})$ the set
\[\mathcal{K}_f(\check{x},R,\mathbf{z},\mathcal{S}):=\{-Rf(\check{x})+(I_N-RS)z~:~S\in \mathcal{S},~z\in \mathbf{z}\}.\]
If
\begin{equation}\label{crucialmodifiedKrawczyk}
\mathcal{K}_f (\check{x},R,\mathbf{z},\mathcal{S})\subseteq \operatorname{int} (\mathbf{z}),   
\end{equation}
then the function $f$ has a zero $x_*$ in $\check{x}+\mathcal{K}_f(\check{x},R,\mathbf{z},\mathcal{S})\subseteq \x$, in which $\operatorname{int} (\mathbf{z})$ is the topological interior of $\mathbf{z}$.\\
Moreover, if $S(f;y,y') \in \mathcal{S}$ for each $y, y'\in \x$, then $x_*$ is the only zero of $f$ contained in $\x$.
\end{thm}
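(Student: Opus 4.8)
The plan is to recast the zero-finding problem for $f$ as a fixed-point problem and apply Brouwer's theorem, in the spirit of the discussion opening this section. I would introduce the single-valued, continuous map
\[
\Phi(z) := z - R\,f(\check{x}+z), \qquad z \in \mathbf{z},
\]
whose fixed points are exactly the $z$ for which $Rf(\check{x}+z)=0$. The first step is to rewrite $\Phi$ through the slope: for each $z \in \mathbf{z}$ the hypothesis guarantees that $S := S(f;\check{x},\check{x}+z)$ lies in $\mathcal{S}$ and satisfies $f(\check{x}+z)-f(\check{x}) = Sz$, so that
\[
\Phi(z) = -Rf(\check{x}) + (I_N - RS)z \in \mathcal{K}_f(\check{x},R,\mathbf{z},\mathcal{S}).
\]
Hence $\Phi(\mathbf{z}) \subseteq \mathcal{K}_f(\check{x},R,\mathbf{z},\mathcal{S})$, and the whole image of $\Phi$ is controlled by the Krawczyk set.

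Next I would invoke Brouwer's fixed-point theorem. The interval vector $\mathbf{z}$ is a Cartesian product of closed discs, hence a compact convex subset of $\mathbb{C}^N\cong\R^{2N}$; the map $\Phi$ is continuous because $f$ is; and by the previous step together with the hypothesis~\eqref{crucialmodifiedKrawczyk} we obtain $\Phi(\mathbf{z}) \subseteq \operatorname{int}(\mathbf{z}) \subseteq \mathbf{z}$, so $\Phi$ is a continuous self-map of $\mathbf{z}$. Brouwer's theorem then produces a point $z_* \in \mathbf{z}$ with $\Phi(z_*)=z_*$. Because $z_* = \Phi(z_*) \in \mathcal{K}_f(\check{x},R,\mathbf{z},\mathcal{S})$, setting $x_* := \check{x}+z_*$ places $x_*$ in $\check{x}+\mathcal{K}_f(\check{x},R,\mathbf{z},\mathcal{S}) \subseteq \mathbf{x}$, matching the asserted localization.

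I expect the genuinely delicate point to be the following: a fixed point of $\Phi$ only yields $Rf(x_*)=0$, not $f(x_*)=0$, so the existence conclusion requires $R$ to be nonsingular, and it is precisely here that the \emph{strictness} of the inclusion into $\operatorname{int}(\mathbf{z})$ must be exploited. My plan is to fix any $S_0 \in \mathcal{S}$ and note that the affine map $z \mapsto -Rf(\check{x}) + (I_N-RS_0)z$ sends $\mathbf{z}$ into $\mathcal{K}_f \subseteq \operatorname{int}(\mathbf{z})$; by Brouwer it has a fixed point $z^\dagger$, which necessarily lies in $\operatorname{int}(\mathbf{z})$. Writing $B := I_N - RS_0$ and translating by $z^\dagger$, the set $C := \mathbf{z}-z^\dagger$ is compact and convex with $0 \in \operatorname{int}(C)$, and the inclusion becomes $B(C) \subseteq \operatorname{int}(C)$. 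Passing to the Minkowski gauge $p_C$ of $C$ (finite, continuous, positively homogeneous, with $\{p_C<1\}=\operatorname{int}(C)$ and $p_C$ bounded below by a multiple of the norm), compactness of the boundary $\{p_C=1\}$ furnishes a constant $q<1$ with $p_C(Bu) \le q\,p_C(u)$ for all $u$; iterating gives $B^k \to 0$, so the spectral radius of $B=I_N-RS_0$ is strictly less than $1$. Therefore $RS_0 = I_N - B$ is nonsingular, which forces both $R$ and every $S_0 \in \mathcal{S}$ to be nonsingular. In particular $Rf(x_*)=0$ now implies $f(x_*)=0$, completing the existence part.

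For the uniqueness statement, under the stronger assumption that $S(f;y,y') \in \mathcal{S}$ for all $y,y'\in\mathbf{x}$, I would argue directly: if $x_1,x_2\in\mathbf{x}$ were both zeros of $f$, then the slope identity gives $0 = f(x_2)-f(x_1) = S(f;x_1,x_2)(x_2-x_1)$ with $S(f;x_1,x_2)\in\mathcal{S}$ nonsingular by the previous paragraph, whence $x_2-x_1=0$. Everything outside the nonsingularity argument of the third paragraph is a routine combination of the slope expansion with Brouwer's theorem; that nonsingularity step, extracting a spectral-radius bound from the open inclusion, is the part I would treat most carefully.
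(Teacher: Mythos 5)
The paper never proves this theorem: it is stated with a citation to \cite{Frommer2014}, so there is no in-paper proof to compare against; your argument has to be judged on its own and against the standard literature proof. Judged that way, it is correct, and its skeleton is exactly the classical one: the slope identity turns $\Phi(z)=z-Rf(\check{x}+z)$ into $-Rf(\check{x})+(I_N-RS)z\in\mathcal{K}_f(\check{x},R,\mathbf{z},\mathcal{S})$, Brouwer's theorem on the compact convex set $\mathbf{z}$ (a product of closed discs) yields a fixed point $z_*\in\mathcal{K}_f\subseteq\operatorname{int}(\mathbf{z})$, and the only delicate issue — which you correctly isolated — is upgrading $Rf(x_*)=0$ to $f(x_*)=0$. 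Your Minkowski-gauge argument for that step is sound: $z^\dagger\in\operatorname{int}(\mathbf{z})$, $B(C)\subseteq\operatorname{int}(C)$ for $C=\mathbf{z}-z^\dagger$, compactness of $\{p_C=1\}$ gives $q<1$ with $p_C(Bu)\le q\,p_C(u)$, hence $\rho(I_N-RS_0)<1$ and $RS_0$ is invertible, so both $R$ and every $S_0\in\mathcal{S}$ are nonsingular. This actually proves more than needed (a spectral radius bound); the usual shortcut in the literature is: if $RS_0v=0$ with $v\neq 0$, then the affine map fixes the whole line $z^\dagger+tv$, and taking the largest admissible $t$ with $z^\dagger+tv\in\mathbf{z}$ gives a fixed point on the boundary that must simultaneously lie in $\operatorname{int}(\mathbf{z})$, a contradiction with maximality. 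Your uniqueness argument — under the stronger hypothesis every slope $S(f;x_1,x_2)$ lies in $\mathcal{S}$, hence is nonsingular, so $0=S(f;x_1,x_2)(x_2-x_1)$ forces $x_1=x_2$ — is also the standard one and is correct.
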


In computation, one defines the Krawczyk operator~\cite{Krawczyk1969}
\begin{equation}\label{standardKrawczyk}
\mathbf{k}_f(\check{x},R,\mathbf{z},\mathbf{S}):=-Rf(\check{x})+(I_N-R\mathbf{S})\mathbf{z},\end{equation}
where $\mathbf{S}$ is an interval matrix containing all slopes $S(f;y,y')$ for $y,y'\in\x$. A common choice for $\mathbf{S}$ is obtained from $\mathbf{f}'(\x)$, an interval evaluation of the Jacobian $f'$ on the interval $\x$. By the inclusion property of interval arithmetic,
\begin{equation}\label{computationalexistence}
\mathbf{k}_f(\check{x},R,\mathbf{z},\mathbf{S})\subset \operatorname{int}\, (\mathbf{z})
\end{equation}
implies~\eqref{crucialmodifiedKrawczyk}. So, if~\eqref{computationalexistence} is satisfied then $f$ has a zero in $\check{x}+\mathbf{k}_f (\check{x},R,\mathbf{z},\mathbf{S})$. In practice, one attempts to make the terms $-Rf(\check{x})$ and $I_N-R\mathbf{S}$ as small as possible, to obtain the crucial relation~\eqref{computationalexistence}. The typical choice is taking as $\check{x}$ a good approximation of a zero of $f$ and as $R$ a good approximation of $(f'(\check{x}))^{-1}$, both obtained via a classic floating point algorithm, see for instance~\cite{Frommer2009}.


\subsection{A Residual form for the Krawczyk operator}\label{residualKrawczykapproach}
We now introduce the concepts that are needed to apply the modified Krawczyk method to solve a matrix equation such as~\eqref{care}. The \emph {Fr\'echet derivative}~\cite{Higham2008} of a Fr\'echet differentiable matrix function $F:\mathbb{C}^{n\times n}\to \mathbb{C}^{n\times n}$ at a point $X \in \mathbb{C}^{n\times n}$ is a linear mapping $L_F :\mathbb{C}^{n\times n} \to \mathbb{C}^{n\times n}$
such that for all $E \in \mathbb{C}^{n\times n}$   
\[F(X + E)- F(X)- L_F(X,E)= o(\norm{E}).\] 
Since $L_{F}$ is a linear operator, we can write
\[\va(L_{F}(X,E)) = K_F(X) \va(E),\]
for a matrix $ K_F(X) \in \mathbb{C}^{n^2 \times n^2}$ that depends on $L$ but not $E$. One refers to $K_F(X)$ as the \emph{Kronecker
form} of the Fr\'echet derivative of $F$ at $X$.

In the case of the continuous-time algebraic Riccati equation~\eqref{care}, we apply the Krawczyk method to the function $F:\mathbb{C}^{n\times n} \to \mathbb{C}^{n\times n}$ defined as
\[
F(X) := A^*X+XA+Q-XGX,
\]
which appeared before in~\cite{Hashemi2012}. For this function, one has
\[L_F(X,E)=E(A-GX)+(A^*-XG) E.\]
Lemma~\ref{kroneckervec} part~\ref{five} turns out that its Kronecker form is
\begin{equation*}
K_F(X)=I_n\otimes (A^*-XG)+(A-GX)^T\otimes I_n.	
\end{equation*}
When $X=X^*$, we can write this expression in an alternate form as
\begin{equation} \label{kronform}
K_F(X)=I_n\otimes (A-GX)^*+(A-GX)^T\otimes I_n.	
\end{equation}

We wish to use the modified Krawczyk algorithm on the function obtained by regarding $F$ as a vector map $f:\mathbb{C}^{N} \to \mathbb{C}^{N}$, with $N=n^2$, defined by
\begin{equation} \label{deff}
	f(x) := \va(A^*X+XA+Q-XGX), \quad \text{$x = \va (X)$}.
\end{equation}
The following result, which is a slight variation of a theorem in~\cite{Hashemi2012}, shows that the Fr\'echet derivative can be used to obtain an enclosure for the slope in the modified Krawczyk method. We report it, with a different proof from the one in~\cite{Hashemi2012}, because this presentation will be more convenient in the following development of our method. Due to this reformulation, we will get a weaker result with respect to uniqueness.
\begin{thm} \label{scan2012}
Let $\X$ be an interval matrix, and $\mathbf{K}_F(\X) = I_n\otimes (A-G\X)^*+(A-G\X)^T\otimes I_n$ be the interval evaluation of $K_F(X)$ in~\eqref{kronform}. Then for each $Y,Y'\in \X$ such that $Y=Y^*$, it holds that $S(f; y, y') \in \mathbf{K}_F(\X)$, where $y=\va(Y),y'=\va(Y')$.
\end{thm}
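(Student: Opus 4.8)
The plan is to compute the slope $S(f;y,y')$ in closed form and then recognize it, term by term, as a point contained in the interval evaluation $\mathbf{K}_F(\X)$. Writing $E := Y'-Y$, I would first expand the difference
\[
F(Y')-F(Y) = A^*E + EA - (Y'GY'-YGY).
\]
The only nonlinear contribution is the quadratic term, and the decisive design choice is how to factor it. I would use the factorization $Y'GY'-YGY = YGE + EGY'$, which keeps the base point $Y$ on the left and the free point $Y'$ on the right, rather than the symmetric alternative $Y'GE+EGY$. Substituting yields
\[
F(Y')-F(Y) = (A^*-YG)E + E(A-GY').
\]

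Next I would vectorize both sides using Lemma~\ref{kroneckervec} part~\ref{five}, viewing $E$ as the middle factor (writing the first term as $(A^*-YG)\,E\,I_n$ and the second as $I_n\,E\,(A-GY')$). This turns the right-hand side into
\[
\bigl[\,I_n\otimes(A^*-YG) + (A-GY')^T\otimes I_n\,\bigr]\va(E),
\]
and since $\va(E)=y'-y$, the bracketed matrix is by Definition~\ref{slope} a valid slope $S(f;y,y')$.

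The key step is to match this expression with $\mathbf{K}_F(\X)$, and this is where the Hermitian hypothesis enters: because $Y=Y^*$ and $G=G^*$, one has $A^*-YG = A^*-Y^*G^* = (A-GY)^*$, so the left factor acquires exactly the conjugate-transpose form carried by $\mathbf{K}_F$. Hence
\[
S(f;y,y') = I_n\otimes(A-GY)^* + (A-GY')^T\otimes I_n.
\]
Since $Y\in\X$ and $Y'\in\X$, the inclusion property of interval arithmetic gives $(A-GY)^*\in(A-G\X)^*$ and $(A-GY')^T\in(A-G\X)^T$; adding the two Kronecker terms then yields $S(f;y,y')\in\mathbf{K}_F(\X)$, as claimed.

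I expect the main obstacle to be conceptual rather than computational: one must pick the factorization of the quadratic term so that the variable constrained to be Hermitian ($Y$) falls in the factor that $\mathbf{K}_F$ carries as a conjugate transpose, while the unconstrained variable ($Y'$) falls in the transposed factor. With the opposite factorization the left factor would read $A^*-Y'G$, which cannot be rewritten as $(A-GY')^*$ unless $Y'$ too is Hermitian. This asymmetry is precisely the origin of the weakened uniqueness conclusion: the containment is guaranteed only when the base point $Y$ is Hermitian, and not for an arbitrary pair $y,y'\in\x$, so the second (uniqueness) hypothesis of Theorem~\ref{modifiedkrawczyk} is not recovered in full.
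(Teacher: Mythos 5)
Your proof is correct and follows essentially the same route as the paper's: the identical factorization $F(Y')-F(Y)=(A^*-YG)(Y'-Y)+(Y'-Y)(A-GY')$, the same use of $Y=Y^*$ and $G=G^*$ to rewrite $A^*-YG$ as $(A-GY)^*$, and the same appeal to the inclusion property. Your closing remark about why the asymmetric factorization forces the Hermitian hypothesis onto the base point only (and hence weakens uniqueness) is a correct reading of exactly what the paper's proof does implicitly.
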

\begin{proof}
We have
\begin{align*}
\va(F(Y)-F(Y')) &= \va((A^*-YG)(Y-Y') + (Y-Y')(A-GY')) \\
& = \va((A-GY)^*(Y-Y') + (Y-Y')(A-GY')) \\
& = (I_n\otimes (A-GY)^* + (A-GY')^T\otimes I_n)\va(y-y'),
\end{align*}
hence by the inclusion property of interval arithmetic 
\begin{align} \label{herminclusion}
S(f;y,y') = (I_n\otimes (A-GY)^* + (A-GY')^T\otimes I_n) \in \mathbf{K}_F(\X).
\end{align}
\end{proof}

The next ingredient that we need to apply the Krawczyk algorithm is the matrix $R$. One would like to use $R \approx (K_F(\check{X}))^{-1}$, where $\check{X}=\check{X}^*$ is an approximation of the stabilizing solution to the CARE~\eqref{care} computed in floating point arithmetic. However, this is the inverse of an $n^2\times n^2$ matrix, whose computation would cost $\mathcal{O}(n^6)$ floating point operations in general. Even considering the Kronecker product structure of $K_F({\check{X}})$, there is no algorithm in literature to compute $R$ explicitly with less than $\mathcal{O}(n^5)$ arithmetic operations. The action of $R$, that is, computing the product $Rv$ given a vector $v\in\mathbb{C}^{n^2}$, can be computed with $\mathcal{O}(n^3)$ operations with methods such as the Bartels-Stewart algorithm~\cite{BartelsStewart1972}. However, this method cannot be used effectively in conjunction with interval arithmetic due to excessive wrapping effects, as argued in~\cite{Frommer2012}.

The work~\cite{Hashemi2012} (and, earlier, on a similar equation, \cite{Frommer2009}) contains an alternative method to perform this computation with complexity $\mathcal{O}(n^3)$, in the case when $A-G\check{X}$ is diagonalizable, where $\check{X}$ is a numerical solution of CARE~\eqref{care}. Assume that an approximate eigendecomposition of $A-G\check{X}$ is available, that is,
\begin{subequations} \label{defVW}
\begin{gather}
A-G\check{X}  \approx V\Lambda W \quad \text{with $V,W,\Lambda \in \mathbb{C}^{n\times n}$}, \\
\Lambda \approx \di(\lambda_{1},\lambda_{2},\dotsc,\lambda_{n}), \quad VW\approx I_n.
\end{gather}
\end{subequations}
We write $\approx$ instead of $=$ because $V$, $W\approx V^{-1}$ and $\lambda_i$, $i=1,\dots,n$ are computed numerically with a standard method such as MATLAB's~\texttt{eig}. So, equality does not hold (in general) in the mathematical sense. Once these quantities are computed, we can factorize $K_F(\check{X})$ by replacing $I_n$ in the first term of~\eqref{kronform} with $V^{-T}I_nV^T$ and in the second term with $W^*I_nW^{-*}$ and then using Lemma~\ref{kroneckervec}, so that
\begin{align*}
K_F({\check{X}})&=I_n \otimes (A-G\check{X})^*+(A-G\check{X})^T \otimes I_n\\
&\notag=(V^{-T} \otimes W^*)(I_n \otimes (W(A-G\check{X})W^{-1})^*\\
&\notag +(V^{-1}(A-G\check{X})V)^T \otimes I_n)(V^T \otimes W^{-*}),
\end{align*}
and choose $R$ as
\begin{align} \label{Rmainenclosure}
R=(V^{-T}\otimes W^*)\Delta^{-1}(V^T\otimes W^{-*}),
\end{align}
where $\Delta=I_n\otimes \Lambda^*+\Lambda^T\otimes I_n.$ Then, $R\approx (K_F(\check{X}))^{-1}$ holds since if $\check{X}$ is close enough to the stabilizing solution of~\eqref{care}, then one can expect that $W (A-G\check{X}) W^{-1}$ and also $V^{-1} (A-G\check{X}) V$ to be close to $\Lambda$. So, the computation of an enclosure, $\va (\mathbf{L})$, for $l := -Rf(\check{x})$ in $\mathcal{K}_f(\check{x},R,\mathbf{z},\mathcal{S})$ can be done using exclusively the matrix-matrix operations, as shown in Lines~\ref{computingstabsol}-\ref{firstterm} of Algorithm~\ref{algo:mainenclosure}.

For the latter term in each member of $\mathcal{K}_f(\check{x},R,\mathbf{z},\mathcal{S})$, however, we get
\begin{align*}
u := (I_{n^2}-RS)z &= (I_{n^2}- (V^{-T} \otimes W^*) \Delta^{-1} (V^T \otimes W^{-*})\\
&\notag(I_n \otimes (A-GY)^* + (A-GY^{'})^T \otimes I_n))z\\
&\notag = ((V^{-T} \otimes W^*) \Delta^{-1}\\
&\notag(\Delta - I_n \otimes (W (A-G Y) W^{-1})^* - (V^{-1} (A-GY^{'}) V)^T \otimes I_n)\\
&\notag(V^T \otimes W^{-*}))z, 
\end{align*}
in which $I_{n^2}$ has replaced by $V^{-T}V^T \otimes W^*W^{-*}$, and $Y,Y^{'} \in \X$ with $Y=Y^*$. Then, Algorithm~\ref{algo:krawczykop} Lines~\ref{first}-\ref{Hadamard2} will compute an enclosure for this term as the interval matrix $\mathbf{U}$ whose vectorization contains this term.\\

Another point to note is that we can transform the multiplication $\Gamma^{-1}\va(M)$, for an $n\times n$ matrix $M$ and a diagonal matrix $\Gamma$, into $M ./ N$, where $N$ is defined by $N_{ij}=\bar{\Gamma}_{ii}+\Gamma_{jj}$, using point~\ref{vecHadamard} of Lemma~\ref{kroneckervec}, and similarly for interval matrices using point~\ref{vecHadamarhint} of Lemma~\ref{kroneckervecint}. This point will appear in, for example, Algorithms~\ref{algo:mainenclosure} Line~\ref{Hadamard1}, and Algorithm~\ref{algo:krawczykop} Line~\ref{Hadamard2}.

The standard method~\cite{Rump2010} to obtain an interval vector $\mathbf{z}=\va(\Z)$ that satisfies~\eqref{computationalexistence} is an iterative one. We start from the residual matrix $\Z_0 := \mathbf{F}(\mathbf{\check{X}})$, that is, the interval evaluation of $F(\check{X})$, and proceed alternating successive steps of enlarging this interval with a technique known as \emph{$\varepsilon$-inflation}~\cite{Rump2010}, applying the Krawczyk operator to it, $\mathbf{z}_{i+1} = \mathbf{k}_f (\check{x},R,\mathbf{z}_i,\mathbf{S})$. This procedure terminates when (and if) we find an interval for which~\eqref{computationalexistence} holds; it is ultimately a trial-and-error procedure, which is not guaranteed to succeed: the operator $\mathbf{k}_f$ may simply not contract its interval argument $\mathbf{z}_i$ sufficiently. This may be due to ill-conditioning of the original equation, to a bad choice of $R$, or to the excessive growth of the intervals in the numerical computations (wrapping effect).

Several slightly different versions of the iterative procedure to obtain a valid interval for inclusion appear in literature; some involve intersecting the intervals obtained in different steps~\cite{Frommer2009,Frommer2014,HashemiDehghan2010}, and some involve two attempts at inclusion in each iteration~\cite{Frommer2009,HashemiDehghan2010}. We use here the simplest approach, following~\cite{Hashemi2012,Rump2010}. The exact strategy is shown in Algorithm~\ref{algo:mainenclosure} (and its subroutine Algorithm~\ref{algo:krawczykop}). The algorithm with these choices coincides with the algorithm presented in~\cite{Hashemi2012}, except for the fact that~\cite{Hashemi2012} presents it for a generic Hermitian solution.

 In all algorithms, whenever the evaluation order of an expression is not specified exactly due to missing brackets, we evaluate from left to right.

\begin{algorithm}[h]
\caption{Computation of an interval matrix $\X$ containing a solution of CARE~\eqref{care}.}
\label{algo:mainenclosure}
\begin{algorithmic}[1]
\STATE Compute an approximate stabilizing solution~ $\check{X}$ of CARE \eqref{care} using any floating point algorithm \label{computingstabsol}
\STATE Compute approximations $V$, $W$, $\Lambda$ for the eigendecomposition of
 $A-G\check{X}$ in floating point \COMMENT{For instance, using the MATLAB command \texttt{eig}}
\STATE	Compute $D:=(D_{ij})$ such that $D_{ij} = \bar{\Lambda}_{ii} + \Lambda_{jj}$
\STATE  Compute interval matrices $\mathbf{I}_{V}$ and $\mathbf{I}_{W}$ containing $V^{-1}$ and $W^{-1}$, respectively \COMMENT{For instance, using \texttt{verifylss.m} from INTLAB.} If this fails, or if $D$ has any zero elements, return \textbf{failure}
\STATE  $\mathbf{\check{X}} = \interval{\check{X},0}$ \COMMENT{To ensure that operations involving $\check{X}$ are performed in a verified fashion with interval arithmetic}
\STATE	$\mathbf{F}=A^*\mathbf{\check{X}}+\mathbf{\check{X}}A + Q -\mathbf{\check{X}} G\mathbf{\check{X}}$ \COMMENT{Using verified interval arithmetic}
\STATE	$\mathbf{G}=\mathbf{I}_{W}^* \mathbf{F}V$
\STATE	$\mathbf{H}=\mathbf{G}./D$\label{Hadamard1}
\STATE  $\mathbf{L}=-W^*\mathbf{H} \mathbf{I}_V$ \label{firstterm}
\STATE  $\Z=\mathbf{L}$
\FOR    {$k=1,\dotsc,k_{max}$}
\STATE	Set $\Z=\square (0,\Z\cdot \interval{1,0.1} +\interval{0,\texttt{realmin}})$ \COMMENT{$\varepsilon$-inflation technique}
\STATE	Compute $\mathbf{K}$ using Algorithm~\ref{algo:krawczykop}
\IF {$\mathbf{K} \subset \operatorname{int} (\Z)$ \{successful inclusion\}} 
\STATE Return $\X=\check{X}+\mathbf{K}$
\ENDIF
\STATE  $\Z=\mathbf{K}$
\ENDFOR
\STATE Return \textbf{failure} \COMMENT{Maximum number of iterations reached}
\end{algorithmic}
\end{algorithm}

\begin{algorithm}[h]
\small
\caption{Computation of an interval matrix $\mathbf{K}$ such that $\va {(\mathbf{K})}=\mathbf{k}_{f}(\check{x},R,\mathbf{z},\mathbf{S})$ encloses $\mathcal{K}_{f}(\check{x},R,\mathbf{z},\mathcal{S})$.}
\label{algo:krawczykop}
\begin{algorithmic}[1]
\STATE   Input $A$, $G$, $Q$, $\check{X}$, $\Z$\\
\COMMENT{Additionally, in this subfunction we use $V,W,\mathbf{I}_V,\mathbf{I}_W,\Lambda,D,\mathbf{L}$ which are already computed in Algorithm~\ref{algo:mainenclosure}}
\STATE	$\mathbf{M}=\mathbf{I}_{W}^*\mathbf{Z}V$\label{first}
\STATE	$\mathbf{N}=W(A-G(\check{X}+\Z))\mathbf{I}_W$ \label{supersetline}
\STATE	$\mathbf{O}=\mathbf{I}_{V}(A-G(\check{X}+\Z))V$
\STATE	$\mathbf{P}=(\Lambda-\mathbf{N})^*\mathbf{M}+\mathbf{M}(\Lambda-\mathbf{O})$
\STATE	$\mathbf{Q}=\mathbf{P}./D$\label{Hadamard2}
\STATE  $\mathbf{U}=W^*\mathbf{Q}\mathbf{I}_V$
\STATE  $\mathbf{K}=\mathbf{L}+\mathbf{U}$
\STATE  Return $\mathbf{K}$
\end{algorithmic}
\end{algorithm}

Notice the $\varepsilon$-inflation, which is performed by adding $\interval{0,\texttt{realmin}}$ to the computed interval. Throughout the paper, \texttt{realmin} denotes the smallest positive normalized floating point number. 

All the operations in Algorithm~\ref{algo:mainenclosure} are matrix-matrix computations requiring $\mathcal{O}(n^3)$ arithmetic operations, so its total cost is $\mathcal{O}(n^3s)$, where $s$ is the number of steps needed before success.


\subsection{Affine transform enclosure}\label{affinetransform}
The main difficulty in using interval arithmetic to verify the existence of a solution is the so-called \emph{wrapping effect}~\cite{Rump2010}: given interval matrices $\A$ and $\B$, the set $\{AB \colon A\in\A,B\in\B\}$ is not (in general) an interval matrix, and to represent it in interval form we have to enlarge it by replacing with an enclosing interval. The same effect happens with most interval operations, not only matrix multiplications, and it is more pronounced in presence of repeated successive operations, intervals with large radius, and ill-conditioned matrices. This increase may prevent us to verify computationally the critical condition \eqref{crucialmodifiedKrawczyk}.

Reducing the impact of the wrapping effect can give a reduction in the diameter of the computed solution enclosure and also in the computational time, since it can reduce the number of iterations needed before a successful inclusion is computed.

In this section we describe a technique for reducing the wrapping effect in the modified Krawczyk method, which has already been successfully applied to several matrix equations~\cite{Frommer2009,Frommer2014}. The main idea is applying the verification algorithm to a modified function $\hat{f}$ obtained from $f$ via an affine transformation; in this way, we reduce the number of interval operations to perform inside the verification procedure.

Assuming that $V$ and $W$ defined in~\eqref{defVW} are nonsingular, we define the function 
\begin{equation}\label{carehat}
\hat{f}(\hat{x}):=(V^T\otimes W^{-*} )f((V^{-T}\otimes W^*) \hat{x}).
\end{equation}
If $\check{x}=\va (\check{X})$ is an approximate solution to $f(x)=0$, then $\hat{\check{x}}:=(V^T\otimes W^{-*})\check{x}$ is an approximate solution to $\hat{f}(\hat{x})=0$. The Kronecker form of its matrix formulation $\hat{F}(\hat{X})$ is given by
\[
K_{\hat{F}}(\hat{X}) = (V^T\otimes W^{-*} ) K_F(X) (V^{-T}\otimes W^*), \quad X = W^{*}\hat{X}V^{-1}.
\]
Moreover, let $\hat{\x} = \va(\hat{\X}) := \hat{\check{x}} + \hat{\mathbf{z}}$, where $\hat{\mathbf{z}} = \va(\hat{\Z})$. A set of slopes for $\hat{f}$ on $\hat{\x}$ can be defined as 
\[\hat{\mathcal{S}}:=\{S(\hat{f};\hat{y},\hat{y}') \colon \hat{y},\hat{y}'\in \hat{\x} \}.\]
Defining $y:=(V^{-T}\otimes W^*)\hat{y}$, $y':=(V^{-T}\otimes W^*)\hat{y}'$, we have 
\begin{align*}
S(\hat{f};\hat{y},\hat{y}')(\hat{y}-\hat{y}')&=\hat{f}(\hat{y})-\hat{f}(\hat{y}')\\
&=(V^T\otimes W^{-*})(f(y)-f(y'))\\
&=(V^T\otimes W^{-*})S(f;y,y')(y-y')\\
&=(V^T\otimes W^{-*})S(f;y,y')(V^{-T}\otimes W^*)(\hat{y}-\hat{y}').
\end{align*}
Hence 
\begin{equation*}
S(\hat{f};\hat{y},\hat{y}')=(V^T\otimes W^{-*})S(f;y,y')(V^{-T}\otimes W^*).	
\end{equation*}
In particular, if we combine this result with Theorem~\ref{scan2012}, we can take in the Krawczyk operator~\eqref{standardKrawczyk}
\begin{equation}\label{slopehat}
\hat{\mathbf{S}} := \mathbf{S}(\hat{f}) = I_n\otimes (W(A-G \mathbf{\check{X}}) W^{-1})^*+(V^{-1} (A-G\mathbf{\check{X}})V)^T\otimes I_n.
\end{equation}
where
\[\mathbf{\check{X}} =  W^{*} \mathbf{\hat{\check{X}}} V^{-1}, \quad \mathbf{\hat{\check{X}}} = \hat {\check{X}} + \hat{\Z},\]
as long as $\check{X}$ is Hermitian.

Observe that
\[
I_n\otimes (W(A-G\check{X}) W^{-1})^*+(V^{-1} (A-G\check{X})V)^T\otimes I_n \approx I_n\otimes \Lambda^* + \Lambda^T\otimes I_n,
\]
so a natural choice for $\hat{R}$ is the diagonal matrix
\[
\hat{R} = \Delta^{-1},
\]
in which $\Delta$ is defined as in~\eqref{Rmainenclosure}.  

Now, we compute an enclosure for $\mathcal{K}_{\hat{f}}(\hat{\check{x}},\hat{R},\hat{\mathbf{z}},\hat{\mathcal{S}}):= \{-\hat{R}\hat{f}(\hat{\check{x}})+(I_n-\hat{R}S)\hat{z}, S \in \hat{\mathcal{S}}, \hat{z} \in \hat{\mathbf{z}}\}$ which can be written as $\mathbf{k}_{\hat{f}}(\hat{\check{x}},\hat{R},\hat{\mathbf{z}},\hat{\mathbf{S}})$ in which $\hat {\check{x}}$ is an approximate solution for \eqref{carehat}, $\hat{R}$ is $\Delta^{-1}$, $\hat{\mathcal{S}}=\{S(\hat{f};\hat{y},\hat{y}'),\,\hat{y},\hat{y}'\in \hat{\x}:=(V^{T}\otimes W^{-*}){\check{x}}+\hat{\mathbf{z}}\}$, and $\hat {\mathbf{z}}:=\va {(\hat {\Z})}$. As in Algorithm~\ref{algo:mainenclosure}, we also take care that the quantities which are not available exactly are enclosed into computable quantities in interval forms, for instance $\mathbf{I}_V$ and $\mathbf{I}_W$ are interval matrices which are known to contain the exact value of $V^{-1}$ and $W^{-1}$, appropriately. More details for computing the superset
\begin{align*}
\mathbf{k}_{\hat{f}}(\hat{\check{x}},\hat{R},\hat{\mathbf{z}},\hat{\mathbf{S}})&=-\hat{R}\hat{f}(\hat{\check{x}})+(I_n-\hat{R}\hat{\mathbf{S}})\hat{\mathbf{z}}\\
&=-\Delta^{-1} ((V^T\otimes W^{-*})f(\check{x})\\
&\qquad-(\Delta-I_n\otimes (W({A}-G\mathbf{\check{X}}) W^{-1})^*\\
& \qquad-(V^{-1}(A-G\mathbf{\check{X}}) V)^T\otimes I_n) \hat{\mathbf{z}}),
\end{align*}
for $\mathcal{K}_{\hat{f}}(\hat{\check{x}},\hat{R},\hat{\mathbf{z}},\hat{\mathcal{S}})$, are displayed in Algorithm~\ref{algo:krawczykophat}. The complete algorithm is shown in Algorithm~\ref{algo:mainenclosurehat}.

\begin{algorithm}[h]
\caption{Computation of an interval matrix $\X$ containing a solution of CARE~\eqref{care}.}
\label{algo:mainenclosurehat}
\begin{algorithmic}[1]
\STATE Compute an approximate stabilizing solution~$\check{X}$ of CARE \eqref{care} using any floating point algorithm 
\STATE Compute approximations $V$, $W$, $\Lambda$ for the eigendecomposition of
 $A-G\check{X}$ in floating point \COMMENT{For instance, using the MATLAB command \texttt{eig}}
\STATE	Compute $D:=(D_{ij})$ such that $D_{ij} = \bar{\Lambda}_{ii} + \Lambda_{jj}$
\STATE  Compute interval matrices $\mathbf{I}_{V}$ and $\mathbf{I}_{W}$ containing $V^{-1}$ and $W^{-1}$, resp. \COMMENT{For instance, using \texttt{verifylss.m} from INTLAB.} If this fails, or if $D$ has any zero elements, return \textbf{failure}
\STATE  $\mathbf{\check{X}} = \interval{\check{X},0}$ \COMMENT{To ensure that operations involving $\check{X}$ are performed in a verified fashion with interval arithmetic}
\STATE	$\mathbf{F}=A^*\mathbf{\check{X}}+Q+\mathbf{\check{X}}(A-G\mathbf{\check{X}})$\label{factorization}
\STATE	$\hat{\mathbf{F}}=\mathbf{I}_{W}^* \mathbf{F}V$
\STATE	$\hat{\mathbf{L}}=-\hat{\mathbf{F}}./D$\label{Hadamard3}
\STATE	$\hat{\Z}=\hat{\mathbf{L}}$
\FOR    {$k=1,\dotsc,k_{max}$}
\STATE	Set $\hat{\Z}=\square (0,\hat{\Z}\cdot \interval{1,0.1} + \interval {0,\texttt{realmin}}$ \COMMENT{$\varepsilon$-inflation technique}
\STATE	Compute $\hat{\mathbf{K}}$ using Algorithm~\ref{algo:krawczykophat} (or Algorithm~\ref{algo:newsuperset})
\IF {$\hat{\mathbf{K}} \subset \operatorname{int} (\hat{\Z})$ \{successful inclusion\}} 
\STATE Return $\X=\check{X}+W^*\hat{\mathbf{K}}\mathbf{I}_{V}$
\ENDIF
\STATE	$\hat{\Z}=\hat{\mathbf{K}}$
\ENDFOR
\STATE Return \textbf{failure} \COMMENT{Maximum number of iterations reached}
\end{algorithmic}
\end{algorithm}

\begin{algorithm}[h]
\small
\caption{Evaluating $\hat{\mathbf{K}}$ with $\va ({\hat{\mathbf{K}}})=\mathbf{k}_{\hat{f}}(\hat{\check{x}},\hat{R},\hat{\mathbf{z}},\hat{\mathbf{S}})$ encloses $\mathcal{K}_{\hat{f}}(\hat{\check{x}},\hat{R},\hat{\mathbf{z}},\hat{\mathcal{S}})$.}
\label{algo:krawczykophat}
\begin{algorithmic}[1]
\STATE   Input $A$, $G$, $Q$, $\check{X}$, $\hat{\Z}$\\
\COMMENT{Additionally, in this sub-function we use $V,W,\mathbf{I}_V,\mathbf{I}_W,\Lambda,D,\hat{\mathbf{L}}$ which are already computed in Algorithm~\ref{algo:mainenclosurehat}}
\STATE	$\hat{\mathbf{M}}=W^* \hat{\Z}\mathbf{I}_{V}$
\STATE	$\hat{\mathbf{N}}=\mathbf{I}_{W}^* (A-G(\check{X}+\hat{\mathbf{M}}))^*W^*$ \label{supersetlinehat}
\STATE	$\hat{\mathbf{O}}=\mathbf{I}_{V}(A-G(\check{X}+\hat{\mathbf{M}}))V$
\STATE	$\hat{\mathbf{P}}=(\Lambda^*-\hat{\mathbf{N}})\hat{\mathbf{M}}+\hat{\mathbf{M}}(\Lambda-\hat{\mathbf{O}})$
\STATE	$\hat{\mathbf{U}}=\hat{\mathbf{P}}./D$\label{Hadamard4}
\STATE  $\hat{\mathbf{K}}=\hat{\mathbf{L}}+\hat{\mathbf{U}}$
\STATE  Return $\hat{\mathbf{K}}$
\end{algorithmic}
\end{algorithm}

Note that computing $\hat{\mathbf{L}}$ and $\hat{\mathbf{U}}$ requires fewer dense $n\times n$ interval matrix multiplications than computing $\mathbf{L}$ and $\mathbf{U}$, so the impact of the wrapping effect is reduced. This is the reason why one expects Algorithm~\ref{algo:mainenclosurehat} to work in more cases than Algorithm~\ref{algo:mainenclosure}.

An important observation is that the last transformation $\X=\check{X}+W^*\hat{\mathbf{K}}\mathbf{I}_{V}$ happens after the Krawczyk verification procedure. So, while the procedure guarantees that only one zero $\hat{x}_s$ of $\hat{f}$ is contained in $W^{-*}\check{X}V+\hat{\mathbf{K}}$, when we return to the original setting and compute an enclosure for $\X=\check{X}+W^*\hat{\mathbf{K}}\mathbf{I}_{V}$, other solutions of~\eqref{care} may fall into this enclosure. Hence, Algorithm~\ref{algo:mainenclosurehat} alone does \emph{not} guarantee that there is a unique solution of~\eqref{care} in $\X$, \emph{nor} that this solution is the stabilizing one. We resolve with this issue in Section~\ref{sec:uniqueness}. 

Another small improvement introduced in this algorithm is gathering $\mathbf{\check{X}}$ in Line~\ref{factorization} of Algorithm~\ref{algo:mainenclosurehat}, in order to reduce the wrapping effect.

\subsection{Verifying a different Riccati equation}\label{verifyingdifferent}
Another possible modification to the verification process consists in modifying the equation into one with (possibly) better numerical properties. The idea stems from the formulation~\eqref{invsubformulation} of a CARE as an invariant subspace problem. We start from the following result.
\begin{lem} \label{invsublemma}
\cite{BiniIannazzoMeini2012} The stabilizing solution $X_s$ of CARE \eqref{care} is the only matrix $X\in\mathbb{C}^{n\times n}$ such that
\begin{equation} \label{invsub}
H \begin{bmatrix}
	I_n\\X_s
\end{bmatrix}
 = 
 \begin{bmatrix}
	I_n\\X_s
\end{bmatrix}R, \quad 
H = \begin{bmatrix}
	A & -G\\ -Q & -A^*
\end{bmatrix} \in \mathbb{C}^{2n\times 2n}
\end{equation}
for some Hurwitz stable matrix $R$. Moreover, it holds that $R=A-GX_s$.
\end{lem}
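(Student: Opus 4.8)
\textbf{The plan} is to prove the two directions of the equivalence separately, relying on the one-to-one correspondence between solutions of~\eqref{care} and certain invariant subspaces of the Hamiltonian $H$ already recorded in~\eqref{invsubformulation}, together with the facts (quoted in the introduction) that the stabilizing solution exists and is unique, and that the eigenvalues of $A-GX$ form a subset of the spectrum of $H$. First I would establish that the stabilizing solution $X_s$ does satisfy~\eqref{invsub} with $R = A - GX_s$. This is immediate: substituting $X = X_s$ into the identity~\eqref{invsubformulation} gives exactly $H\begin{bsmallmatrix}I_n\\X_s\end{bsmallmatrix} = \begin{bsmallmatrix}I_n\\X_s\end{bsmallmatrix}(A-GX_s)$, so $R = A-GX_s$, and by the definition of a stabilizing solution this $R$ is Hurwitz stable. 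Thus the ``Moreover'' assertion on the form of $R$ comes for free in this direction.

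\textbf{The converse} is the substantive part and also what I expect to be the main obstacle. Suppose some $X \in \mathbb{C}^{n\times n}$ satisfies $H\begin{bsmallmatrix}I_n\\X\end{bsmallmatrix} = \begin{bsmallmatrix}I_n\\X\end{bsmallmatrix}R$ for a Hurwitz stable $R$. Reading off the top block row of $H$, namely $A - GX = R$, identifies $R$ and shows that $X$ is a solution of~\eqref{care} whose closed-loop matrix $A-GX = R$ is Hurwitz. Reading off the bottom block row, $-Q - A^*X = XR = X(A-GX)$, recovers precisely the Riccati equation~\eqref{care}, confirming that $X$ is a genuine solution. Hence $X$ is a stabilizing solution, and by the uniqueness of the stabilizing solution~\cite[Theorem~2.17]{BiniIannazzoMeini2012} we must have $X = X_s$. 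The delicate point to handle carefully is that the invariant-subspace relation~\eqref{invsub} a priori only forces $A-GX = R$ to share its eigenvalues with a subset of the spectrum of $H$; the hypothesis that $R$ is \emph{globally} Hurwitz is what upgrades ``$X$ solves the CARE'' to ``$X$ is the stabilizing solution,'' and it is this step that must invoke uniqueness rather than re-derive it.

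\textbf{In summary}, both directions reduce to expanding the block equation~\eqref{invsub} into its two rows, recognizing the top row as the definition of $R$ and the bottom row as the CARE itself, and then citing the existence/uniqueness of the stabilizing solution to close the argument. No substantial computation is required; the only care needed is in tracking conjugate transposes in the $-A^*$ block so that the bottom row reproduces~\eqref{care} exactly, and in ensuring the Hurwitz hypothesis on $R$ is used precisely to match $X$ against the \emph{stabilizing} solution.
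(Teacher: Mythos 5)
Your proof is correct, but note that the paper itself contains no proof of Lemma~\ref{invsublemma}: it is stated as a known result quoted from~\cite{BiniIannazzoMeini2012} (only the subsequent Lemma~\ref{changecare} is proved in the paper), so there is no in-paper argument to compare against. What you wrote is the standard argument behind the cited result, and it is complete: the forward direction is immediate from~\eqref{invsubformulation} together with the definition of a stabilizing solution, and the converse correctly expands~\eqref{invsub} into its two block rows, obtaining $R = A-GX$ from the top row and recovering the CARE~\eqref{care} from the bottom row after substituting $R$, then invokes uniqueness of the stabilizing solution~\cite[Theorem~2.17]{BiniIannazzoMeini2012} to conclude $X = X_s$. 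One small imprecision: you flag as ``delicate'' the worry that \eqref{invsub} a priori only forces $A-GX$ to share eigenvalues with a subset of the spectrum of $H$. In fact no spectral reasoning is needed at all: the top block row forces the matrix identity $A-GX=R$ outright, so the Hurwitz hypothesis on $R$ transfers verbatim to the closed-loop matrix, and the only place where anything beyond block multiplication is used is the final appeal to uniqueness of the stabilizing solution. This does not affect the validity of your argument.
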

The subspace $\operatorname{im} \begin{bmatrix}
	I_n\\X_s
\end{bmatrix}$
is called \emph{stable invariant subspace} of the matrix $H$. We use this formulation to relate the solution $X_s$ to the one of a different CARE. The following result is a natural result of the literature on algebraic Riccati equations (see e.g.~\cite{BiniIannazzoMeini2012}), and the idea used here is certainly not original, but we prove it explicitly because we do not have a reference with this exact statement. 
\begin{lem} \label{changecare}
Let $X_s$ be the stabilizing solution of~\eqref{care}. Suppose that $P\in\mathbb{C}^{2n\times 2n}$ be a nonsingular matrix such that $P^{-1}HP$ has the same structure as $H$, i.e.,
\begin{equation} \label{hamiltonianstructure}
P^{-1}HP = \begin{bmatrix}
	A_P & -G_P\\ -Q_P & -A_P^*
\end{bmatrix} \in \mathbb{C}^{2n \times 2n},
\end{equation}
for some matrices $A_P,G_P=G_P^*,Q_P=Q_P^* \in \mathbb{C}^{n\times n}$. Let $Y_s$ be the stabilizing solution of the CARE 
\begin{equation} \label{carep}
 A_P^*Y+YA_P+Q_P=YG_PY,
\end{equation}
and $U_1,U_2\in\mathbb{C}^{n\times n}$ be defined by
\[
P\begin{bmatrix}
	I_n\\Y_s	
\end{bmatrix}
=
\begin{bmatrix}
	U_1\\U_2	
\end{bmatrix}.
\]
If $U_1$ is invertible, then $X_s=U_2U_1^{-1}$.
\end{lem}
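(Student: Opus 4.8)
The plan is to apply Lemma~\ref{invsublemma} twice: once to the transformed equation~\eqref{carep} in order to transport its stable invariant subspace back through $P$, and once to the original equation~\eqref{care} in order to invoke its uniqueness clause. First I would note that, by the structural hypothesis~\eqref{hamiltonianstructure} together with $G_P=G_P^*$ and $Q_P=Q_P^*$, the matrix $P^{-1}HP$ is precisely the Hamiltonian associated with the CARE~\eqref{carep}. Applying Lemma~\ref{invsublemma} to~\eqref{carep} therefore gives
\begin{equation*}
(P^{-1}HP)\begin{bmatrix} I_n \\ Y_s \end{bmatrix} = \begin{bmatrix} I_n \\ Y_s \end{bmatrix} R_P, \qquad R_P := A_P - G_P Y_s,
\end{equation*}
where $R_P$ is Hurwitz stable because $Y_s$ is by assumption the stabilizing solution of~\eqref{carep}.

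Next I would left-multiply this identity by $P$ and use the definition $P\begin{bsmallmatrix} I_n \\ Y_s \end{bsmallmatrix}=\begin{bsmallmatrix} U_1 \\ U_2 \end{bsmallmatrix}$ to obtain
\begin{equation*}
H \begin{bmatrix} U_1 \\ U_2 \end{bmatrix} = \begin{bmatrix} U_1 \\ U_2 \end{bmatrix} R_P.
\end{equation*}
Exploiting the invertibility of $U_1$, I would right-multiply by $U_1^{-1}$; since $\begin{bsmallmatrix} U_1 \\ U_2 \end{bsmallmatrix} U_1^{-1}=\begin{bsmallmatrix} I_n \\ U_2 U_1^{-1} \end{bsmallmatrix}$, this yields
\begin{equation*}
H \begin{bmatrix} I_n \\ U_2 U_1^{-1} \end{bmatrix} = \begin{bmatrix} I_n \\ U_2 U_1^{-1} \end{bmatrix} \bigl(U_1 R_P U_1^{-1}\bigr).
\end{equation*}

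Setting $X:=U_2 U_1^{-1}$ and $R:=U_1 R_P U_1^{-1}$, the matrix $X$ then satisfies a relation of the form~\eqref{invsub} for the original Hamiltonian $H$ with $R$ in the role of the Hurwitz factor. Because $R$ is similar to $R_P$, it shares its spectrum and is consequently Hurwitz stable, so the uniqueness clause of Lemma~\ref{invsublemma} applies and forces $X=X_s$, that is, $X_s=U_2 U_1^{-1}$. I expect no genuine analytic obstacle here: the argument is a purely algebraic transport of the stable invariant subspace along the similarity $P$. The only points requiring care are bookkeeping ones, namely confirming that $P^{-1}HP$ is indeed the Hamiltonian of~\eqref{carep} and recording that Hurwitz stability is preserved under the similarity $R_P\mapsto U_1 R_P U_1^{-1}$, which is exactly what makes the uniqueness part of Lemma~\ref{invsublemma} applicable.
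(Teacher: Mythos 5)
Your proof is correct and follows exactly the paper's own argument: apply Lemma~\ref{invsublemma} to the transformed CARE~\eqref{carep}, left-multiply by $P$, right-multiply by $U_1^{-1}$, observe that $U_1R_PU_1^{-1}$ is Hurwitz stable by similarity, and invoke the uniqueness clause of Lemma~\ref{invsublemma} to identify $U_2U_1^{-1}$ with $X_s$. There is nothing to add; the bookkeeping points you flag (that $P^{-1}HP$ is the Hamiltonian of~\eqref{carep} and that Hurwitz stability is preserved under similarity) are exactly the implicit steps in the paper's proof.
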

\begin{proof}
We have
\[
P^{-1}HP \begin{bmatrix}
	I_n\\Y_s	
\end{bmatrix}
=
\begin{bmatrix}
	I_n\\Y_s	
\end{bmatrix}
R_P, 
\]
for the Hurwitz stable matrix $R_P=A_P-G_PY_s$. Multiplying both sides by $P$ on the left we get
\[
H \begin{bmatrix}
	U_1\\U_2	
\end{bmatrix}
=
\begin{bmatrix}
	U_1\\U_2	
\end{bmatrix}
R_P,
\]
and then multiplying on the right by $U_1^{-1}$
\[
H \begin{bmatrix}
	I_n\\U_2U_1^{-1}	
\end{bmatrix}
=
\begin{bmatrix}
	I_n\\U_2U_1^{-1}	
\end{bmatrix}
U_1R_PU_1^{-1}.
\]
Since $U_1R_PU_1^{-1}$ is Hurwitz stable, Lemma~\ref{invsublemma} gives us the thesis.
\end{proof}

The paper~\cite{MehrmannPoloni2012} contains a convenient strategy to construct a matrix $P$ with a particularly simple form (a permutation matrix with some sign changes) for which all the required assumptions hold and in addition $Y_s$ is bounded. Define for each $k=1,2,\dots,n$
\[
S_k:= \begin{bmatrix}
I_{n}-E_{kk} & E_{kk}\\
-E_{kk} & I_{n} - E_{kk}
\end{bmatrix} \in \mathbb{C}^{2n \times {2n}},
\]
where $E_{kk}$ is the matrix which has $1$ in position $(k,k)$ and zeros elsewhere; in other words, $S_k$ swaps the entries $k$ and $n+k$ of a vector in $\mathbb{C}^{2n}$, and changes sign to one of them. The matrices $S_k$ are orthogonal and commute with each other.
\begin{thm} \label{thm:mehp}\protect{~\cite[Theorem~3.4]{MehrmannPoloni2012}}
Let $\mathcal{I}=\{i_1,i_2,\dots,i_k\}$ be a subset of $\{1,2,\dots,n\}$, and $P= S_{i_1} S_{i_2}\dotsm S_{i_k}$. Then
\begin{enumerate}
	\item For each choice of $\mathcal{I}$, the matrix $P^{-1}HP$ has the structure~\eqref{hamiltonianstructure}.
	\item For each $\tau\geq \sqrt{2}$, one can find $\mathcal{I}$ such that $U_1$ is nonsingular and $Y_s$ has all its elements bounded in modulus by $\tau$ (referring to the definitions of $U_1$ and $Y_s$ in Lemma~\ref{changecare}).
\end{enumerate}
\end{thm}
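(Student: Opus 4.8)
The plan is to prove the structural claim~(1) by symplectic invariance of the Hamiltonian block structure, and the boundedness claim~(2) by realizing $Y_s$ as a principal pivot transform of $X_s$ obtained through a volume‑maximization argument.

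For the structural claim~(1), I would first observe that a $2n\times 2n$ matrix $M$ has the block form in~\eqref{hamiltonianstructure} with $G_P=G_P^*$ and $Q_P=Q_P^*$ if and only if $JM$ is Hermitian, where $J=\begin{bsmallmatrix}0 & I_n\\ -I_n & 0\end{bsmallmatrix}$: writing $M$ in $n\times n$ blocks and imposing $(JM)^*=JM$ forces the off‑diagonal blocks of $M$ to be Hermitian and the $(2,2)$ block to equal minus the conjugate transpose of the $(1,1)$ block. Since $G=G^*$ and $Q=Q^*$, the matrix $H$ itself satisfies this condition. It therefore suffices to check that each $S_k$ is symplectic, $S_k^*JS_k=J$, which is a one‑line computation from $E_{kk}^2=E_{kk}$ and $E_{kk}(I_n-E_{kk})=0$; as products of symplectic matrices are symplectic, $P$ is symplectic as well. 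Then $J(P^{-1}HP)=P^*(JH)P$ is Hermitian, and claim~(1) follows.

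For claim~(2), I would make the dependence of $Y_s$ on $\mathcal{I}$ explicit through the subspace picture. By Lemma~\ref{invsublemma} the columns of $\begin{bsmallmatrix}I_n\\X_s\end{bsmallmatrix}$ span the stable invariant (hence Lagrangian, as $X_s=X_s^*$) subspace $\mathcal{S}$ of $H$, and by the proof of Lemma~\ref{changecare} the graph $\begin{bsmallmatrix}I_n\\Y_s\end{bsmallmatrix}$ spans $P^{-1}\mathcal{S}$. Because $S_k$ rotates the coordinate pair $(k,n+k)$ by a quarter turn, the basis matrix $P^{-1}\begin{bsmallmatrix}I_n\\X_s\end{bsmallmatrix}$ of $P^{-1}\mathcal{S}$ is obtained from $\begin{bsmallmatrix}I_n\\X_s\end{bsmallmatrix}$ by exchanging, for each $k\in\mathcal{I}$, its $k$-th row $e_k^T$ with the $k$-th row of $X_s$ (up to a sign). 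Calling $B$ its top block and $B'$ its bottom block, one gets $Y_s=B'B^{-1}$ up to signs; this is precisely the principal pivot transform of $X_s$ on the pivot set $\mathcal{I}$. The choice $\mathcal{I}=\emptyset$ gives $B=I_n$, so among the $2^n$ admissible one‑row‑per‑pair selections the quantity $\lvert\det B\rvert$ attains a maximum that is at least $1$; I would fix $\mathcal{I}$ to be a maximizer, which guarantees $B$, and hence $U_1$, nonsingular.

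The heart of the argument is bounding the entries of $C:=B'B^{-1}$ (equal in modulus to those of $Y_s$) for this maximizing $\mathcal{I}$. Replacing in $B$ the top‑block row of a single pair $k$ by the other row of that pair multiplies $\lvert\det B\rvert$ by $\lvert C_{kk}\rvert$, so maximality gives $\lvert C_{kk}\rvert\le 1$. Performing the analogous exchange simultaneously on two distinct pairs $k\ne i$ multiplies $\lvert\det B\rvert$ by the modulus of the $2\times 2$ minor $C_{kk}C_{ii}-C_{ki}C_{ik}$, so maximality gives $\lvert C_{kk}C_{ii}-C_{ki}C_{ik}\rvert\le 1$; combined with $\lvert C_{kk}\rvert,\lvert C_{ii}\rvert\le1$ this yields $\lvert C_{ki}C_{ik}\rvert\le 2$. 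Finally, since $P^{-1}\mathcal{S}$ is again Lagrangian, $Y_s$ is Hermitian, whence $\lvert C_{ki}\rvert=\lvert C_{ik}\rvert$ and therefore $\lvert C_{ki}\rvert\le\sqrt2$. This bounds every entry of $Y_s$ by $\sqrt2$, and a fortiori by any $\tau\ge\sqrt2$. I expect the main obstacle to be exactly this last step: the single‑ and double‑exchange determinant identities must be set up carefully so that maximality of $\lvert\det B\rvert$ translates into the minor bound, and it is only the Hermitian symmetry $\lvert C_{ki}\rvert=\lvert C_{ik}\rvert$ coming from the Lagrangian structure that converts the product bound $\lvert C_{ki}C_{ik}\rvert\le2$ into the entrywise bound $\sqrt2$ — which is also what pins the threshold in the statement to $\sqrt2$ rather than a smaller constant.
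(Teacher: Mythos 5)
The paper does not prove Theorem~\ref{thm:mehp} at all: it is imported, with citation, from \cite[Theorem~3.4]{MehrmannPoloni2012}, so there is no internal proof to compare yours against. Judged on its own merits, your argument is correct, and it is in substance a reconstruction of the cited reference's proof: part~(1) follows from the characterization that $M$ has the structure~\eqref{hamiltonianstructure} if and only if $JM$ is Hermitian, together with the symplecticity $S_k^*JS_k=J$ of each factor (your block computations check out, and $J(P^{-1}HP)=P^*(JH)P$ does the rest); part~(2) is the maximum-volume/principal-pivot-transform argument, in which the single- and double-exchange determinant identities give $\abs{C_{kk}}\le 1$ and $\abs{C_{kk}C_{ii}-C_{ki}C_{ik}}\le 1$ at a maximizing $\mathcal{I}$, and the Hermitian symmetry of the graph matrix of a Lagrangian subspace converts $\abs{C_{ki}C_{ik}}\le 2$ into the entrywise bound $\sqrt{2}$, which is exactly where the threshold in the statement comes from.

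Two small points to tighten. First, your appeal to Lemma~\ref{changecare} is mildly circular: that lemma \emph{defines} $Y_s$ as the stabilizing solution of~\eqref{carep} and presupposes its existence, whereas you construct $Y_s$ as $B'B^{-1}$ from the permuted basis of the stable subspace. To close the loop, observe that
\begin{equation*}
(P^{-1}HP)\begin{bmatrix} I_n \\ B'B^{-1}\end{bmatrix}
=\begin{bmatrix} I_n \\ B'B^{-1}\end{bmatrix} B(A-GX_s)B^{-1},
\end{equation*}
where $B(A-GX_s)B^{-1}$ is Hurwitz stable; reading off the two block rows shows that $B'B^{-1}$ solves~\eqref{carep} with Hurwitz closed-loop matrix $A_P-G_P(B'B^{-1})$, hence it \emph{is} the stabilizing solution $Y_s$, and $U_1=B^{-1}$ is then nonsingular by construction. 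Second, the sign bookkeeping deserves one explicit sentence: each $S_k$ flips one sign, so the selected top block is $DB_0$ for a diagonal sign matrix $D$ and an unsigned row selection $B_0$; this changes neither $\abs{\det B}$ nor the moduli of the entries of $B'B^{-1}$, so the maximality argument and the final bound are unaffected.
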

These results suggest an alternative verification strategy:
\begin{enumerate}
	\item Compute $P$ satisfying Theorem~\ref{thm:mehp}.
	\item Form the coefficients $A_P,G_P$ and $Q_P$, which can be obtained from the entries of $H$ only using permutations and sign changes.\label{formingH-P}
	\item Using one of the various verification methods for CAREs, compute an interval $\Y$ containing the stabilizing solution $Y_s$.
	\item Compute
	\[
	\begin{bmatrix}
		\mathbf{U}_1\\
		\mathbf{U}_2
	\end{bmatrix} = P
	\begin{bmatrix}
		I_n\\ \Y
	\end{bmatrix},
	\]
which, again, requires only rearranging the entries and changing their signs, and hence can be done without wrapping effects.
	\item Compute using interval arithmetic a solution $\X$ to the linear system $\X \mathbf{U}_1 = \mathbf{U}_2$.
\end{enumerate}
Then, clearly, $\X$ contains the true stabilizing solution $X_s$ of~\eqref{care}. Again, since the interval matrix $\X$ computed in the last step is only a solution enclosure and suffers from wrapping effect, it might be the case that other solutions of the CARE~\eqref{care} are contained in $\X$ in addition to $X_s$.

The MATLAB toolbox~\cite{pgdoubling2012} contains algorithms to compute a subset $\mathcal{I}$ (and hence a matrix $P$) satisfying the conditions of Theorem~\ref{thm:mehp}, for every $\tau > \sqrt{2}$, in time bounded by $\mathcal{O}(n^3 \log_\tau n)$. The factor $\log_\tau n$ is a worst-case factor only, and in our experience for most matrices fine-tuning the choice of $\tau$ does not have a big impact on neither performance nor stability. Here, we always use the method with its default value $\tau=3$.

With this method, one transforms the problem of verifying~\eqref{care} into the one of verifying~\eqref{carep}; this latter Riccati equation has a stabilizing solution $Y_s$ whose entries are bounded in modulus by $\tau$, hence one may expect that less cancellation can take place in the algorithms. While there is no formal guarantee that this must happen, in practice, in most cases  the eigenvector matrix $V_P$ of $R_P=A_P-G_PY_s$ has a lower condition number than the one $V$ of $A-GX_s$, as we report in the experiments (see Table~\ref{fig:condcond} in the following), and verification of~\eqref{carep} is often easier than verification of~\eqref{care}. Ultimately, this is only a heuristic approach, though.

Let us analyze the computational complexity of Algorithm~\ref{algo:CAREverification}.
\begin{thm}\label{complexityalgo3}
Algorithm~\ref{algo:CAREverification} requires at most $\mathcal{O}(n^3 \log_\tau n + n^3s)$ floating point operations, where $s$ is the number of steps required by the inner verification algorithm in Line~\ref{line:innerverification}.
\end{thm}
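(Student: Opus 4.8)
The plan is to bound the cost of each of the five steps of the verification strategy listed just before the theorem, and then add them up. I would proceed in the order in which the steps are performed, tracking the dominant term at each stage.

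First I would handle Step~1, the computation of the permutation-with-signs matrix $P$ satisfying Theorem~\ref{thm:mehp}. Here there is nothing to prove: the excerpt already states, citing~\cite{pgdoubling2012}, that a suitable subset $\mathcal{I}$ (hence $P$) can be computed in time $\mathcal{O}(n^3 \log_\tau n)$. This accounts for the $\mathcal{O}(n^3 \log_\tau n)$ term in the claimed bound. Next, for Step~\ref{formingH-P}, forming $A_P, G_P, Q_P$ only requires rearranging and possibly sign-flipping the $4n^2$ entries of $H$, so it costs $\mathcal{O}(n^2)$ and is absorbed into the other terms.

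The heart of the estimate is Step~3, the inner call to a verification method for the transformed CARE~\eqref{carep} on Line~\ref{line:innerverification}. By the analysis already carried out for Algorithm~\ref{algo:mainenclosure} (and likewise Algorithm~\ref{algo:mainenclosurehat}), each iteration consists solely of $\mathcal{O}(n^3)$ matrix--matrix operations, and with $s$ iterations before success the cost is $\mathcal{O}(n^3 s)$; this is exactly the definition of $s$ in the statement. For Step~4 I would note that applying $P$ to $\begin{bsmallmatrix} I_n \\ \Y \end{bsmallmatrix}$ is again only a rearrangement with sign changes, costing $\mathcal{O}(n^2)$, so the interval matrices $\mathbf{U}_1, \mathbf{U}_2$ are obtained essentially for free. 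Finally, for Step~5 I would observe that solving the interval linear system $\X \mathbf{U}_1 = \mathbf{U}_2$ with a verified dense solver (e.g.~\texttt{verifylss.m}) costs $\mathcal{O}(n^3)$.

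Adding the five contributions gives $\mathcal{O}(n^3 \log_\tau n) + \mathcal{O}(n^2) + \mathcal{O}(n^3 s) + \mathcal{O}(n^2) + \mathcal{O}(n^3)$, and since the last $\mathcal{O}(n^3)$ term is dominated by $\mathcal{O}(n^3 s)$ (as $s \geq 1$) and the $\mathcal{O}(n^2)$ terms are negligible, the total simplifies to $\mathcal{O}(n^3 \log_\tau n + n^3 s)$, as claimed. I do not expect any genuine obstacle here; the only mild subtlety is making sure that each ``permutation and sign change'' step really is $\mathcal{O}(n^2)$ rather than hiding a matrix product, and that the inner verification's per-step cost is correctly identified as $\mathcal{O}(n^3)$ so that the leading term matches the definition of $s$. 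Both of these follow directly from the structural observations already recorded in the preceding discussion.
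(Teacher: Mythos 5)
Your proposal is correct and takes essentially the same route as the paper: a step-by-step cost accounting in which forming $P$ costs $\mathcal{O}(n^3\log_\tau n)$, the inner verification on Line~\ref{line:innerverification} costs $\mathcal{O}(n^3)$ per step, and all remaining operations are dominated. The only discrepancy is bookkeeping: the paper also charges $\mathcal{O}(n^3)$ for computing the approximate stabilizing solution $\check{X}$ in Line~\ref{lst:caresol} (via the ordered Schur method of~\cite{Miyajima2015}), a step of Algorithm~\ref{algo:CAREverification} that your five-step list omits but which is dominated anyway, whereas you make explicit the $\mathcal{O}(n^2)$ cost of the permutation steps and the $\mathcal{O}(n^3)$ cost of the final verified linear solve, which the paper's proof leaves implicit.
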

\begin{proof}
Computing $\check{X}$ in Line~\ref{lst:caresol} requires $\mathcal{O}(n^3)$ operations, using for instance the algorithm mentioned in~\cite{Miyajima2015} (based on the ordered Schur form of $H$ and an additional Newton step with the residual computation performed in emulated quadruple-precision arithmetic). Forming $P$ in Theorem~\ref{thm:mehp} via the approach explored in~\cite{pgdoubling2012} costs $\mathcal{O}(n^3 \log_\tau n)$ floating point operations. Computing $\Y$ by using Algorithm~\ref{algo:mainenclosurehat} has cost $\mathcal{O}(n^3)$ per step (and the same will hold for Algorithm~\ref{algo:fixedpoint} that we will introduce later): the cost for the eigendecomposition and the enclosures $\mathbf{I}_V$ and $\mathbf{I}_W$ is again cubic in $n$, and all the other matrix-matrix operations (including the Hadamard divisions) in Algorithms~\ref{algo:mainenclosurehat} and~\ref{algo:newsuperset} have again cost $\mathcal{O}(n^3)$ at most, as they only involve $n \times n$ matrices.
\end{proof}


\subsection{A new superset}\label{newsuperset}
According to Theorem~\ref{modifiedkrawczyk}, the computed interval matrix is guaranteed to contain a unique solution if the set $\mathbf{S}$ contains the slopes $S(f;y,y')$  for all $y,y' \in {\x}$. On the other hand, if we employ an interval matrix containing only the slopes $S(f;\check{x},y')$ for all $y' \in {\x}$, existence can be proved, but not uniqueness. Since we have already decided to forgo (for now) uniqueness, it makes sense to let go of it also when choosing the superset $\mathbf{S}$. 

A simple modification to our proof of Lemma~\ref{scan2012} gives a tighter inclusion for the slope superset by reducing the wrapping effect.
\begin{thm}\label{anewsuperset}
Let $f$ be as in~\eqref{deff}, $\X\in\mathbb{IC}^{n\times n}$ be an interval matrix, and $\check{X}\in \X$ be Hermitian. Then, the interval matrix
\[
I_n\otimes (A-G\check{X})^*+(A-G\X)^T\otimes I_n
\]
contains the slopes $S(f, \check{x}, y')$ for each $y' \in \X$ where $\check{x}=\va (\check{X})$ and $y'=\va (Y').$
\end{thm}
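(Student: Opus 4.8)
The plan is to mirror the proof of Lemma~\ref{scan2012} almost verbatim, but to use a \emph{different} algebraic splitting of the quadratic term so that the fixed Hermitian point $\check{X}$ — and only $\check{X}$ — ends up in the first Kronecker factor. Concretely, I would start from the exact increment of $F$ between the fixed point $\check{X}$ and an arbitrary $Y'\in\X$, expand $F(Y')-F(\check{X})$, and rewrite the difference of quadratic terms as $Y'GY'-\check{X}G\check{X}=(Y'-\check{X})GY'+\check{X}G(Y'-\check{X})$, rather than the splitting $Y'G(Y'-\check{X})+(Y'-\check{X})G\check{X}$ implicit in Lemma~\ref{scan2012}. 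Collecting the linear terms then yields the factored form
\[
F(Y')-F(\check{X})=(A^*-\check{X}G)(Y'-\check{X})+(Y'-\check{X})(A-GY').
\]

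The next step is to exploit the hypotheses that both $\check{X}$ and $G$ are Hermitian in order to simplify the left factor: since $(A-G\check{X})^*=A^*-\check{X}^*G^*=A^*-\check{X}G$, the identity above becomes $F(Y')-F(\check{X})=(A-G\check{X})^*(Y'-\check{X})+(Y'-\check{X})(A-GY')$. Vectorizing with part~\ref{five} of Lemma~\ref{kroneckervec} and using $\va(Y'-\check{X})=y'-\check{x}$ gives
\[
f(y')-f(\check{x})=\bigl(I_n\otimes(A-G\check{X})^*+(A-GY')^T\otimes I_n\bigr)(y'-\check{x}),
\]
so by Definition~\ref{slope} the bracketed matrix \emph{is} a valid slope $S(f;\check{x},y')$. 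Finally, since $Y'\in\X$ while $\check{X}$ is a fixed matrix, the inclusion property of interval arithmetic places this slope inside $I_n\otimes(A-G\check{X})^*+(A-G\X)^T\otimes I_n$, which is the claim.

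The only genuinely delicate point — and the reason the statement is phrased exactly this way — is the choice of splitting. In Lemma~\ref{scan2012} one insists that the first argument be Hermitian, so the splitting used there produces the factor $A^*-YG=(A-GY)^*$ and \emph{both} Kronecker summands pick up the interval $\X$. Here $Y'$ is allowed to be arbitrary (non-Hermitian), so the analogous simplification on the right factor is unavailable; the whole gain comes from routing the Hermitian point $\check{X}$ into the first factor, where the identity $(A-G\check{X})^*=A^*-\check{X}G$ can legitimately be applied. This is what makes the first summand $I_n\otimes(A-G\check{X})^*$ a thin (degenerate) interval term instead of $I_n\otimes(A-G\X)^*$, thereby reducing the wrapping effect. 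Everything else is a routine verification of the two algebraic identities above.
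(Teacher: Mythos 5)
Your proof is correct and is essentially the paper's own argument: the paper's proof simply repeats Theorem~\ref{scan2012} with $y=\check{x}$, which yields exactly your identity (up to an overall sign, since you compute $F(Y')-F(\check{X})$ rather than $F(\check{X})-F(Y')$), puts the Hermitian $\check{X}$ into the conjugate-transposed factor $I_n\otimes(A-G\check{X})^*$, and leaves only the second Kronecker summand as a genuine interval. Note that what you present as a ``different splitting'' of the quadratic term is in fact the same splitting used in the proof of Theorem~\ref{scan2012} once its Hermitian first argument is taken to be $\check{X}$ (your two displayed splittings coincide after negating and renaming the arguments), so there is no real divergence from the paper's route.
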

\begin{proof}
We repeat the proof of Lemma~\ref{scan2012}, with $y=\check{x}$, and replace the term $\mathbf{K}_F(\X)$ in~\eqref{herminclusion} with the tighter inclusion $(I_n\otimes (A-G \check{X})^* + (A-G\X)^T\otimes I_n)$.
\end{proof}

As a consequence of Theorem~\ref{anewsuperset}, we can replace~\eqref{slopehat} with
\begin{equation}\label{betterslopehat}
\hat{\mathbf{S}} = I_n\otimes (W(A-G\check{X}) W^{-1})^*+(V^{-1} (A-G\mathbf{\check{X}})V)^T\otimes I_n
\end{equation}
in our modified Krawczyk algorithm applied to $\hat{f}$, and it will still yield an interval matrix containing a (possibly non-unique) solution of~\eqref{care}.

\begin{algorithm}[h]
\caption{Evaluating $\hat{\mathbf{K}}$ with $\va ({\hat{\mathbf{K}}})=\mathbf{k}_{\hat{f}}(\hat{\check{x}},\hat{R},\hat{\mathbf{z}},\hat{\mathbf{S}})$ encloses $\mathcal{K}_{\hat{f}}(\hat{\check{x}},\hat{R},\hat{\mathbf{z}},\hat{\mathcal{S}})$ with a tighter superset that does not guarantee solution uniqueness.} \label{algo:newsuperset}
\begin{algorithmic}[2]
\STATE \makeatletter\stepcounter{ALC@line}\makeatother 
\COMMENT{This algorithm is identical to Algorithm~\ref{algo:krawczykophat}, apart from Line~\ref{supersetlinehat} which is replaced by the following}
\STATE $\hat{\mathbf{N}}=\mathbf{I}_{W}^* (A-G\check{X})^*W^*$
\end{algorithmic}
\end{algorithm}

\begin{algorithm}[h]
\caption{Computation of an interval matrix $\X$ containing a solution of~\eqref{care} using permuted Riccati bases.} 
\label{algo:CAREverification}
\begin{algorithmic}[1]
\STATE	Input $A$, $G$, $Q$
\STATE	Compute an approximate stabilizing solution $\check{X}$ of~\eqref{care} using any floating point algorithm \label{lst:caresol}
\STATE	Compute a matrix $P$ satisfying point~\ref{formingH-P} of Theorem~\ref{thm:mehp} \COMMENT{For instance with the toolbox~\cite{pgdoubling2012}}

\STATE	Compute $A_P,G_P,Q_P$ satisfying~\eqref{hamiltonianstructure}
\STATE	Compute a verified solution $\Y$ to~\eqref{carep} using either Algorithm~\ref{algo:mainenclosurehat} or Algorithm~\ref{algo:fixedpoint} \label{line:innerverification}. If the verification fails, return \textbf{failure}
\STATE Set $\begin{bmatrix} {\mathbf{U}}_1 \\ {\mathbf{U}}_2
        \end{bmatrix}=P \begin{bmatrix}
        I_n \\ \Y
        \end{bmatrix}$
\STATE Compute an enclosure $\X$ for the solution of the interval system $\X{\mathbf{U}}_1 = {\mathbf{U}}_2$ \COMMENT{For instance, using \texttt{verifylss} from INTLAB}. If this fails, return \textbf{failure} 
\STATE Return $\X$
\end{algorithmic}
\end{algorithm}

\subsection{Verification of uniqueness and stabilizability} \label{sec:uniqueness}
As noted before, the modifications to the Krawczyk method introduced here do not ensure that the found interval matrix contains only one solution to~\eqref{care}. However, the following result holds.
\begin{thm}[\protect{\cite[Theorem~23.3]{Brockett_book}}]
The CARE~\eqref{care} has at most one stabilizing solution.
\end{thm}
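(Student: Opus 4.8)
The plan is to prove uniqueness by supposing $X_1$ and $X_2$ are both stabilizing solutions of~\eqref{care} and showing that their difference $E := X_1 - X_2$ must vanish. I would use from the start that a stabilizing solution is Hermitian, so $X_1 = X_1^*$ and $X_2 = X_2^*$, together with the standing assumption $G = G^*$.

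First I would subtract the two copies of the equation, $A^*X_1 + X_1 A + Q = X_1 G X_1$ and $A^*X_2 + X_2 A + Q = X_2 G X_2$, obtaining
\[
A^* E + E A = X_1 G X_1 - X_2 G X_2.
\]
The key algebraic step is to telescope the right-hand side by inserting and subtracting the mixed term $X_1 G X_2$, which gives $X_1 G X_1 - X_2 G X_2 = X_1 G E + E G X_2$. Substituting and regrouping the two sides then yields
\[
(A^* - X_1 G)\, E + E\, (A - G X_2) = 0,
\]
and, using $X_1 = X_1^*$ and $G = G^*$ to recognize $A^* - X_1 G = (A - G X_1)^*$, this is exactly the homogeneous Sylvester equation
\[
(A - G X_1)^*\, E + E\, (A - G X_2) = 0.
\]

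The remaining step is to invoke the standard spectral criterion for Sylvester equations. Since $X_1$ and $X_2$ are stabilizing, both closed-loop matrices $A - G X_1$ and $A - G X_2$ are Hurwitz, so every eigenvalue of $(A - G X_1)^*$ and every eigenvalue of $A - G X_2$ lies in the open left half-plane. Hence for any eigenvalue $\mu$ of $(A - G X_1)^*$ and any eigenvalue $\nu$ of $A - G X_2$ we have $\operatorname{Re}(\mu + \nu) < 0$, so $\mu + \nu \neq 0$; equivalently, the spectra of $(A - G X_1)^*$ and $-(A - G X_2)$ are disjoint. This is precisely the condition under which the Sylvester operator $E \mapsto (A - G X_1)^* E + E (A - G X_2)$ is invertible, and therefore has trivial kernel, forcing $E = 0$ and hence $X_1 = X_2$.

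The main obstacle is essentially the bookkeeping in the algebraic step: one must carry out the mixed-term telescoping correctly and then identify $A^* - X_1 G$ with $(A - G X_1)^*$, which relies on the Hermitian symmetry of both $X_1$ and $G$. Everything else is a direct appeal to the well-known fact that a Sylvester equation $M E + E N = 0$ admits only the zero solution whenever $\operatorname{spec}(M) \cap \operatorname{spec}(-N) = \emptyset$, a condition guaranteed here by the stabilizing (Hurwitz) property of both solutions.
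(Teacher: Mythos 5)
Your proof is correct, but it follows a genuinely different route from the paper's. The paper argues through the $2n\times 2n$ Hamiltonian $H$: by~\eqref{invsubformulation}, a stabilizing solution supplies $n$ eigenvalues of $H$ in the open left half-plane, Hamiltonian spectral symmetry supplies $n$ more in the open right half-plane, hence $H$ has no purely imaginary eigenvalue, and uniqueness is then delegated to the cited result \cite[Theorem~2.17]{BiniIannazzoMeini2012}. You instead work entirely at the level of $n\times n$ matrices: subtracting the two copies of~\eqref{care} and telescoping the quadratic term gives the homogeneous Sylvester equation $(A-GX_1)^*E+E(A-GX_2)=0$, and the Hurwitz property of both closed-loop matrices places $\operatorname{spec}\bigl((A-GX_1)^*\bigr)$ and $\operatorname{spec}\bigl(-(A-GX_2)\bigr)$ in opposite open half-planes, so the Sylvester operator is injective and $E=0$. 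Your argument is self-contained (modulo the standard non-resonance criterion for Sylvester equations), while the paper's is shorter but outsources the decisive step to a cited theorem and has the advantage of fitting the invariant-subspace viewpoint used throughout the paper. One point to flag: you assume both stabilizing solutions are Hermitian, and this is what lets you rewrite $A^*-X_1G$ as $(A-GX_1)^*$; this is legitimate, since the paper asserts Hermitianity of stabilizing solutions in the introduction, but that assertion comes from the same citation as uniqueness itself, so to remove any appearance of circularity you can first prove Hermitianity by the very same device: if $X$ is stabilizing, then $X^*$ also solves~\eqref{care} (conjugate-transpose the equation, using $G=G^*$, $Q=Q^*$), and subtracting the two equations and telescoping gives $(A-GX)^*(X-X^*)+(X-X^*)(A-GX)=0$, a Lyapunov equation with Hurwitz coefficient, forcing $X=X^*$. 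Note also that your uniqueness step really only needs \emph{one} of the two solutions to be Hermitian.
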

A proof using the facts in~\cite{BiniIannazzoMeini2012} can be obtained by considering the eigenvalues of $H$. Let $X_s$ be a stabilizing solution, and let $\lambda_1,\lambda_2,\dots,\lambda_n$ be the eigenvalues of $A-GX_s$. Because of the formula~\eqref{invsubformulation}, $\lambda_1,\lambda_2,\dots,\lambda_n$ are also eigenvalues of $H$ (see \cite[Section~2.1.1]{BiniIannazzoMeini2012}). Moreover, the eigenvalues of $H$ have Hamiltonian symmetry, (see \cite[Section~1.5]{BiniIannazzoMeini2012}), so there are $n$ more eigenvalues with positive real part. We have identified $2n$ eigenvalues of $H$, counted with multiplicity, and none of them is purely imaginary; hence $H$ has no purely imaginary eigenvalue and \cite[Theorem~2.17]{BiniIannazzoMeini2012} holds.

Hence, if all matrices contained in $A-G\X$ are Hurwitz stable, then the solution $X\in \X$ is verified to be stabilizing (hence Hermitian) \emph{and} unique. To verify stability, we can use the method described in~\cite{Miy14}, which is summarized in~\cite[Lemma~2.4]{Miyajima2015}. The resulting method is described in Algorithm~\ref{algo:stabilizing}. In the algorithm, we use the notation $\Re z$ to mean the real part of the complex number $z$.
\begin{algorithm}[h]
\small
\caption{Verifying the Hurwitz stability of an interval matrix $\mathbf{M}$.} \label{algo:stabilizing}
\begin{algorithmic}[1]
\STATE Input $\mathbf{M}$
\STATE Compute approximations $V$, $W$, $\Lambda$ for the eigendecomposition of
 $\mi(\mathbf{M})$ in floating point \COMMENT{For instance, using the MATLAB command \texttt{eig}}
\STATE $\mathbf{V} = \interval{V,0}$
\STATE $R = \operatorname{mag}(W(\mathbf{M}\mathbf{V}-\mathbf{V}\Lambda))$
\STATE $S = \operatorname{mag} (I_n - \mathbf{V}W)$
\STATE $e = $ the $n\times 1$ matrix with $e_{i,1}=1$ for each $i$
\STATE $u = Re$
\STATE $t = Se$
\STATE $\mu = \max(u ./ (e-t))$
\STATE $r = u + \mu t$
\IF {($\max (t) < {1}$ and $r + \max (\Re (\operatorname{diag} (\Lambda)))e  < 0$)}
\STATE Return \textbf{success} \COMMENT{Every matrix $M\in\mathbf{M}$ is Hurwitz stable}
\ELSE 
\STATE Return \textbf{failure}
\ENDIF
\end{algorithmic}
\end{algorithm}

Notice one subtle point: when we apply Algorithm~\ref{algo:stabilizing} to $A-G\X$ we recompute $V$, $\Lambda$ and $W$ from the eigendecomposition of $\mi (A-G\X)$; this differs slightly from using the values computed previously, which came from the eigendecomposition of $A-G\check{X}$ (because $\X$ was not available at that point). This choice gives better results in our experiments. The cost for this verification is again $\mathcal{O}(n^3)$ floating point operations.

Hence, if the verification in Algorithm~\ref{algo:stabilizing} succeeds for the solution enclosure $\X$ returned by either Algorithm~\ref{algo:mainenclosure} or Algorithm~\ref{algo:mainenclosurehat}, then $\X$ contains exactly one solution of~\eqref{care}, and it is the stabilizing one.


\section{A direct fixed-point method}\label{directfixedpoint}
While the methods described in the previous sections work for many examples of Riccati equations, an essential limitation is that all of them require the closed-loop matrix $A-G\check{X}$ to be diagonalizable. Products with the eigenvector matrix $V$ and its inverse are required along the algorithm, and if these are ill-conditioned then the wrapping effects are more pronounced and the required inclusion $\mathbf{K} \subset \operatorname{int} (\mathbf{Z})$ or $\hat{\mathbf{K}} \subset \operatorname{int} (\hat{\mathbf{Z}})$ is less likely to hold. A striking example of this phenomenon is the first example in the benchmark set~\cite{carex1995}. This is a simple $2\times 2$ problem which appears in~\cite{carex1995} as nothing more than a ``warm-up example'', and yet all the verification methods described here (including those from~\cite{Hashemi2012} and~\cite{Miyajima2015}) fail.

To solve this issue, we would like to propose a different method for verification. The procedure is based on some ideas which appear in the context of ADI methods~\cite{Wachspress1988}. While this method is somehow more primitive and works on a lower number of examples, it does not require that the closed-loop matrix be diagonalizable.

We rewrite the CARE~\eqref{care} as follows. Given any Hermitian $\check{X}\in\mathbb{C}^{n \times n}$, one can write the exact stabilizing solution $X_s$ of the CARE~\eqref{care} as $X_s=\check{X}+Z_s$ for an unknown Hermitian correction matrix $Z_s$, and rewrite~\eqref{care} as a Riccati equation in $Z$,
\begin{equation} \label{shiftedcare}
\tilde{A}^*Z + Z\tilde{A} + \tilde{Q} =ZGZ, \quad \text{with } \tilde{A}=A-G\check{X},~\tilde{Q}=A^*\check{X} + \check{X} A + Q - \check{X} G \check{X}.
\end{equation}
The stabilizing solution of this equation is $Z_s$, since $\tilde{A}-GZ_s=A-GX_s$ is Hurwitz stable. Note that the degree-two coefficient $G$ is unchanged. For any $s\in\mathbb{C}$ such that $\tilde{A}-sI_n$ is nonsingular, \eqref{shiftedcare} is equivalent to the fixed point equation
\[
 Z = (\tilde{A}-sI_n)^{-*}(ZGZ-\tilde{Q}-Z(\tilde{A}+sI_n)).
\]
Thus, if we find an interval $\Z$ such that $(\tilde{A}-sI_n)^{-*}(\Z G\Z-\tilde{Q}-\Z(\tilde{A}+sI_n)) \subseteq \Z$, it follows from the Brouwer fixed-point theorem that~\eqref{shiftedcare} has a solution $Z_*\in\Z$, and that~\eqref{care} has a solution $X_* \in \check{X}+\Z$.

This simple iterative method is effective when $(\tilde{A}-sI_n)^{-*}\Z(\tilde{A}+sI_n)$ does not suffer excessively from wrapping effects, since we can expect $\tilde{Q}$ and the quadratic term $\Z G \Z$ to be small.

Are there any preconditioning transformations that we can make to improve the method? A possibility is applying a change of basis to the whole problem. Let $V\in\mathbb{C}^{n\times n}$ be invertible; we set
\begin{equation}
\label{defVmatrices}
Z_V = V^*ZV, \quad A_V = V^{-1}\tilde{A}V,~Q_V = V^*\tilde{Q}V,~G_V = V^{-1}GV^{-*},\end{equation}

so that~\eqref{shiftedcare} is transformed into
\begin{equation*}
	 A_V^*Z_V + Z_V A_V + Q_V = Z_V G_V Z_V. 
\end{equation*}
Continuing as above, we obtain the fixed-point equation
\begin{equation} \label{fixedpoint}
Z_V = (A_V-sI_n)^{-*}(Z_VG_VZ_V-Q_V-Z_V(A_V+sI_n)).	
\end{equation}
If $\tilde{A}$ is diagonalizable, we can set $V$ as its computed approximate eigenvector matrix, as in~\eqref{defVW}. One can see then that the resulting method has several steps in common with the Krawczyk method described in the previous sections. This time, though, we are free to choose the matrix $V$ without the risk of our method turning into a $\mathcal{O}(n^6)$ one, since everything in~\eqref{fixedpoint} is computable explicitly with standard linear algebra operations. 

Some heuristic experimentation led us to the following choices: we take $s$ equal to the approximation of $-\min\{\Re \lambda : \text{$\lambda$ is an eigenvalue of $\tilde{A}$}\}$ computed in floating-point arithmetic (motivated by the idea to make $A_V+sI_n$ small and $A_V-sI_n$ large), and $V$ as the orthogonal factor of the (computed) Schur factorization of $\tilde{A} \approx VTV^{-1}$ (motivated by the idea to concentrate most of the ``weight'' of $V^{-1}\tilde{A}V$ on its diagonal). The matrix $\tilde{A}$ is an approximation of $A-GX_s$, which is Hurwitz stable, so in exact arithmetic we would have $s>0$ and $A_V-sI_n = V^{-1}(\tilde{A}-sI_n)V$ invertible, since its eigenvalues are $\lambda_i-s$, where $\lambda_i$ are the eigenvalues of $A-GX_s$, and thus have strictly negative real part. Hence these properties are likely to hold also for its computed approximation $\tilde{A}$.

The resulting algorithm is described in Algorithm~\ref{algo:fixedpoint}.
\begin{algorithm}[h]
\caption{Computation of an interval matrix $\X$ containing a solution of~\eqref{care} using a simple fixed-point algorithm.} \label{algo:fixedpoint}
\begin{algorithmic}[1]
\STATE	Input $A$, $G$, $Q$
\STATE	Compute an approximate stabilizing solution $\check{X}$ of~\eqref{care} using any floating point algorithm
\STATE Compute $\tilde{A}$ (in floating point) as in~\eqref{shiftedcare}
\STATE Choose $s$ and $V$, for instance $s \approx -\min\{\Re \lambda : \lambda \text{ is an eigenvalue of $\tilde{A}$}\}$ and $V$ as the (approximate) orthogonal Schur factor of $\tilde{A}$
\STATE Compute an interval matrix $\mathbf{I}_V$ containing $V^{-1}$ \COMMENT{For instance, using \texttt{verifylss} from INTLAB}
\STATE Compute interval matrices $\A_V,\mathbf{G}_V$, $\mathbf{Q}_V$ containing $A_V$, $G_V$, $Q_V$, respectively \COMMENT{Replacing $\check{X}$ and $V$ in~\eqref{shiftedcare} and~\eqref{defVmatrices} with $\mathbf{\check{X}}=\interval{\check{X},0}$ and $\mathbf{V} = \interval{V,0}$, respectively}
\STATE Compute an interval matrix $\mathbf{I}_s$ containing $(\A_V^* - sI_n)^{-1}$ \COMMENT{For instance, using \texttt{verifylss} from INTLAB}
\STATE Set $k=0$ and $\Z_V = -\mathbf{I}_s \mathbf{Q}_V$
\FOR{$k=1,\dots,k_{max}$}
\STATE	Set $\Z_V = \square (0,\Z_V \cdot \interval{1,0.1} +\interval{0,\texttt{realmin}})$ \COMMENT{$\varepsilon-$inflation technique}
	\STATE Set $\Y = \mathbf{I}_s(-\mathbf{Q}_V - \Z_V (\A_V + sI_n - \mathbf{G}_V\Z_V))$
\IF{$\Y \subset \operatorname{int} (\Z_V)$ }
\STATE Return $\X = \check{X} + \mathbf{I}_V^* \Z_V \mathbf{I}_V$
\ENDIF
\ENDFOR
\STATE Return \textbf{failure} \COMMENT{Maximum number of iterations reached}
\end{algorithmic}
\end{algorithm}
\begin{thm}\label{complexityalgo4}
Algorithm~\ref{algo:fixedpoint} has a cost of $\mathcal{O}(n^3s)$ arithmetic operations, if the verification succeeds in $s$ steps.
\end{thm}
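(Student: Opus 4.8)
The plan is to proceed exactly as in the proof of Theorem~\ref{complexityalgo3}, by inspecting Algorithm~\ref{algo:fixedpoint} line by line and verifying that every operation involves only $n \times n$ (interval) matrices, so that no single step exceeds the cubic cost one already pays for a dense matrix multiplication.

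First I would account for the preamble, i.e.\ the lines preceding the main loop. Computing the approximate solution $\check{X}$ costs $\mathcal{O}(n^3)$, as already argued in Theorem~\ref{complexityalgo3}; forming $\tilde{A}$ via~\eqref{shiftedcare} is one matrix product and a subtraction; the choice of $s$ and of the orthogonal Schur factor $V$ requires an eigenvalue/Schur computation, all $\mathcal{O}(n^3)$; the verified inverses $\mathbf{I}_V$ and $\mathbf{I}_s$ obtained with \texttt{verifylss} are verified linear-system solves and cost $\mathcal{O}(n^3)$ each; and the interval coefficients $\A_V,\mathbf{G}_V,\mathbf{Q}_V$ from~\eqref{defVmatrices}, together with the initialization $\Z_V = -\mathbf{I}_s\mathbf{Q}_V$, amount to a constant number of $n\times n$ interval matrix products. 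Since there are only constantly many such steps, the entire preamble costs $\mathcal{O}(n^3)$.

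Next I would bound a single pass through the \texttt{for} loop. The $\varepsilon$-inflation and the inclusion test $\Y \subset \operatorname{int}(\Z_V)$ are element-wise operations on $n\times n$ interval matrices, hence $\mathcal{O}(n^2)$. The evaluation of the fixed-point map $\Y = \mathbf{I}_s(-\mathbf{Q}_V - \Z_V(\A_V + sI_n - \mathbf{G}_V\Z_V))$ consists of a fixed number of $n\times n$ interval matrix multiplications and additions, and the back-transformation $\X = \check{X} + \mathbf{I}_V^*\Z_V\mathbf{I}_V$ is likewise two products; each such verified interval operation has the same cubic complexity as its floating-point counterpart. Thus one iteration costs $\mathcal{O}(n^3)$.

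The crucial structural observation, which is really the only thing to check, is that the reformulation~\eqref{fixedpoint} is expressible entirely in terms of $n\times n$ matrices, so that, unlike the naive application of the Krawczyk operator to~\eqref{deff}, we never form or invert an $n^2\times n^2$ Kronecker-structured matrix. Granting this, if the verification succeeds after $s$ iterations the loop executes at most $s$ times, contributing $\mathcal{O}(n^3 s)$; adding the $\mathcal{O}(n^3)$ preamble gives the claimed total $\mathcal{O}(n^3 s)$. There is no genuine analytic obstacle here, the argument being purely a matter of bookkeeping; the only point requiring care is confirming that the verified interval routines (in particular \texttt{verifylss}) inherit the cubic cost of their point counterparts.
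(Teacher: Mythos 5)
Your proposal is correct and follows essentially the same argument as the paper: every operation, both in the preamble and in each loop iteration, is a matrix-matrix operation on $n\times n$ (interval) matrices, so the cost is $\mathcal{O}(n^3)$ per step and $\mathcal{O}(n^3 s)$ overall. The only nuance you gloss over, and which the paper acknowledges, is that the Schur decomposition is $\mathcal{O}(n^3)$ only ``in practice'' (being an iterative eigenvalue computation without a formal operation bound), but this matches the paper's own treatment.
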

\begin{proof}
Again, all the required operations in every step are matrix-matrix operations between $n\times n$ matrices. The Schur decomposition requires $\mathcal{O}(n^3)$ operations as well, in practice~\cite{GolubVanLoan2013}.
\end{proof}

Once again, uniqueness is not guaranteed, but it can be deduced \emph{a posteriori} if the verification of the stabilizing property of the computed inclusion interval $\X$ succeeds.

\section{Numerical experiments}\label{numerical}
This section presents numerical experiments to validate the algorithms. We compare four different approaches: 
\begin{enumerate}
	\item The modified Krawczyk approach described in~\cite{Hashemi2012} and in Section~\ref{residualKrawczykapproach}. This corresponds to Algorithm~\ref{algo:mainenclosure}. When the algorithm is successful, we check afterwards whether $A-G\X$ is Hurwitz stable using Algorithm~\ref{algo:stabilizing}.
	We call this approach \emph{Method H} in the following.
	\item The method described in~\cite{Miyajima2015} (using the MATLAB implementation \texttt{Mn.m} published by its author). The method already includes running Algorithm~\ref{algo:stabilizing} to check if the computed solution is Hurwitz stable, so we do not need any additional steps. We call this procedure \emph{Method M}.
	\item Algorithm~\ref{algo:CAREverification}, choosing as its subroutine to solve the transformed CARE~\eqref{carep} the Krawczyk-based Algorithm~\ref{algo:mainenclosurehat} and the modified superset trick used in Algorithm~\ref{algo:newsuperset}. This is a combination of all the improvements to Method H described in Section~\ref{findingenclosures}. We call this procedure \emph{Method K} (where K stands for Krawczyk). When the algorithm is successful, we check afterwards whether $A-G\X$ is Hurwitz stable using Algorithm~\ref{algo:stabilizing}.
	\item Algorithm~\ref{algo:CAREverification} again, but using the fixed-point Algorithm~\ref{algo:fixedpoint} to solve the transformed CARE~\eqref{carep}. This is a combination of the techniques described in Sections~\ref{verifyingdifferent} and~\ref{directfixedpoint}.
	We call this procedure \emph{Method F} (where F stands for fixed-point). When the algorithm is successful, we check afterwards whether $A-G\X$ is Hurwitz stable using Algorithm~\ref{algo:stabilizing}.
\end{enumerate}
The algorithms were tested in MATLAB 2015b with INTLAB v6, using unit round off $u=2^{-53}\approx {1.1\times {10^{-16}}}$, and run on a computer with an Intel core Duo \@2.66GHz processor and 6GB main memory.

The required stabilizing solutions of CAREs are computed using the method described in~\cite{Miyajima2015} (ordered Schur method followed by one step of Newton refinement in simulated quadruple precision).

In order to assess the quality of the enclosures computed in each experiment we use the norm-wise relative error $\nre$ and the geometric average relative precision $\garp$. The first error measure is defined as 
\[\nre (\X):= \operatorname{mag}\frac{\left\|\ra({\X})\right\|_{F}}{\left\|\X\right\|_{F}}.\]
This is the simplest possible bound for the (norm-wise) relative error
\[
\frac{\norm{X_s-\mi (\X)}_{F}}{\norm{X_s}_{F}}
\]
obtained by taking $\mi (\X)$ as an approximation of the solution.

Following previous works~\cite[e.g.]{Frommer2012}, we also report a component-wise error indicator $\garp$ based on the relative precision of an interval, $\rp(\x)$, defined as
\[\rp(\x):=\min( \relerr(\x),1),\]
where $\relerr$ is the relative error of the interval $\x=\interval{\mi({\x}),\ra({\x})}$ defined by 
\[ 
\relerr (\x):= 
\begin{cases}
\left| \frac{ \ra ({\x})}{ \mi ({\x})}\right|,& \quad \text{if} \quad 0\notin \x, \\
\ra ({\x}),& \quad \text{otherwise}.
\end{cases}
\]
We define our residual measure as the geometric average of $\rp(\X_{ij})$
\[ \garp (\X):=\left(\prod_{i,j=1}^{n} \rp(\X_{ij})\right)^{\frac{1}{n^2}}, \quad \X=(\X_{ij}).\]
The quantity $-\log{(\rp(\x))}$ can be interpreted as the number of known correct digits of an \emph{exact} value contained in $\x$; so, loosely speaking, $-\log{(\garp(\X))}$ represents the average number of known correct digits~\cite{Frommer2014}.

\subsection{CAREX Benchmark problems}

We ran these algorithms on all the equations from the benchmark set described in~\cite{ChuLiuMehrmann2007}, which contains experiments taken from the test suite CAREX~\cite{carex1995}, run with both default and non-default arguments. The results are reported in Tables~\ref{comparison1}--\ref{stabilizingproperty}, and a visualization of the results is in Figure~\ref{fig:garps}.

The \texttt{Experiment number} follows the order used in~\cite{ChuLiuMehrmann2007}. Note that this set of problems is designed to be challenging for non-verified CARE solvers in machine arithmetic, so it is not surprising that the verification algorithms cannot deal with all of them with perfect accuracy.

When the algorithms are successful, we report in Tables~\ref{comparison1}--\ref{comparison3} the number $k$ of required iterations of the outer Krawczyk loop. If the algorithm breaks down or does not converge within the maximum number of steps (which we took to be 50 for the iterative Methods~H,~K and F), then we write a star in the corresponding column. Method~M is not iterative, therefore for it we put $-$ in the column containing the number of iterations.

The size of the problem (value of $n$) and the total time (in seconds, including the time required to verify the stabilizing property) taken on our test machine are reported, too, as well as the norm-2 condition number of $V$ (used by Methods~H,~M and~K) and the same quantity for the eigenvector matrix $V_P$ of the closed-loop matrix $A_P-G_PY_{s}$ used in the two Methods~K and~F. All these details are given in Tables~\ref{comparison1}--\ref{comparison3} in the column named $\texttt{Problem property}$.

\bgroup
\def\arraystretch{1.1}
\begin{landscape}
\begin{table}[h!]\small
\centering
\caption{Comparison between various proposed methods}\label{comparison1}
\begin{tabular}{|c||c|c||c|c||c|c||c|c||c|c|}
	\hline
	
\texttt{Experiment} & \multicolumn{2}{c||}{\texttt{Problem property}} & \multicolumn{2}{c||}{\texttt{Method~H}} & \multicolumn{2}{c||}{\texttt{Method~M}} & \multicolumn{2}{c||}{\texttt{Method~K}} & \multicolumn{2}{c|}{\texttt{Method~F}}\\
\cline{2-11}
	\texttt{number} & \multicolumn{2}{c||}{size} & nre & $k$ & nre & $k$ & nre & $k$ & nre & $k$ \\
	\cline{2-3}
	 in~\cite{ChuLiuMehrmann2007} & $\operatorname{cond}(V)$ & $\operatorname{cond}(V_P)$ & garp & time & garp & time & garp & time & garp & time \\
	
	\hline
	\hline
1 & \multicolumn{2}{c||}{2} &   NaN &  *  &   NaN &  -  &   NaN &  *  & 3.75e-15 & 2\\
\cline{2-3}
 & 7.75e+07 & 3.17e+07 &   NaN &  *  &   NaN &  *  &   NaN &  *  & 4.18e-15 & 2.93e-02\\
\hline
\hline
2 & \multicolumn{2}{c||}{2} & 9.67e-14 & 1 & 4.65e-15 &  -  & 1.21e-14 & 1 & 1.00e-14 & 3\\
\cline{2-3}
 & 1.01e+01 & 1.15e+00 & 1.04e-13 & 2.00e-02 & 4.97e-15 & 7.59e-03 & 1.27e-14 & 2.28e-02 & 1.06e-14 & 2.70e-02\\
\hline
\hline
3 & \multicolumn{2}{c||}{4} & 3.93e-14 & 1 & 2.99e-15 &  -  & 3.70e-14 & 1 & 8.04e-14 & 6\\
\cline{2-3}
 & 9.73e+00 & 5.11e+00 & 2.80e-14 & 2.28e-02 & 2.12e-15 & 1.03e-02 & 5.02e-14 & 2.90e-02 & 1.05e-13 & 4.28e-02\\
\hline
\hline
4 & \multicolumn{2}{c||}{8} & 1.02e-14 & 1 & 2.34e-15 &  -  & 7.76e-14 & 1 & 1.03e-13 & 15\\
\cline{2-3}
 & 1.23e+00 & 2.18e+00 & 1.49e-14 & 1.74e-02 & 3.42e-15 & 9.05e-03 & 1.05e-13 & 2.44e-02 & 1.38e-13 & 5.72e-02\\
\hline
\hline
5 & \multicolumn{2}{c||}{9} & 6.73e-14 & 1 & 1.10e-14 &  -  & 4.34e-13 & 1 & 2.06e-12 & 42\\
\cline{2-3}
 & 7.54e+01 & 6.52e+01 & 4.34e-14 & 1.78e-02 & 1.05e-14 & 9.37e-03 & 7.57e-13 & 2.45e-02 & 4.70e-12 & 1.23e-01\\
\hline
\hline
6 & \multicolumn{2}{c||}{30} & 4.79e-13 & 2 & 3.35e-14 &  -  & 9.20e-09 & 2 &   NaN &  * \\
\cline{2-3}
 & 1.11e+05 & 3.48e+03 & 2.92e-11 & 5.15e-02 & 1.87e-12 & 1.88e-02 & 1.15e-08 & 6.64e-02 &   NaN &  * \\
\hline
\hline
7 & \multicolumn{2}{c||}{2} & 2.35e-16 & 2 & 2.12e-16 &  -  & 5.57e-16 & 1 & 7.47e-16 & 3\\
\cline{2-3}
 & 1.62e+00 & 3.31e+00 & 5.48e-16 & 2.18e-02 & 4.36e-16 & 7.58e-03 & 6.17e-16 & 2.27e-02 & 8.72e-16 & 2.69e-02\\
\hline
\hline
8 & \multicolumn{2}{c||}{2} & 3.22e-08 & 1 & 1.78e-08 &  -  & 3.67e-16 & 1 & 8.55e-16 & 2\\
\cline{2-3}
 & 1.01e+00 & 2.42e+00 & 4.75e-10 & 1.68e-02 & 3.04e-10 & 1.08e-02 & 4.83e-16 & 2.34e-02 & 1.88e-10 & 2.52e-02\\
\hline
\hline
9 & \multicolumn{2}{c||}{2} & 6.41e-16 & 1 & 1.92e-16 &  -  & 3.34e-10 & 1 & 2.25e-09 & 7\\
\cline{2-3}
 & 1.22e+00 & 6.99e+01 & 2.59e-15 & 1.69e-02 & 4.87e-16 & 8.31e-03 & 3.36e-10 & 2.28e-02 & 2.26e-09 & 3.64e-02\\
\hline
\hline
10 & \multicolumn{2}{c||}{2} &   NaN &  *  & 5.36e-12 &  -  & 3.00e-08 & 1 & 1.59e-08 & 45\\
\cline{2-3}
 & 6.80e+01 & 1.11e+00 &   NaN &  *  & 5.36e-12 & 8.49e-03 & 3.00e-08 & 2.15e-02 & 1.59e-08 & 1.34e-01\\
\hline
\hline
11 & \multicolumn{2}{c||}{2} & 9.65e-16 & 1 & 6.29e-16 &  -  & 2.45e-15 & 1 & 4.53e-15 & 2\\
\cline{2-3}
 & 3.74e+00 & 1.01e+00 & 1.04e-15 & 2.20e-02 & 6.75e-16 & 7.71e-03 & 2.63e-15 & 2.90e-02 & 4.87e-15 & 3.00e-02\\
\hline
\end{tabular} 
\end{table}
\end{landscape}
\egroup
\newpage

\bgroup
\def\arraystretch{1.1}
\begin{landscape}
\begin{table}[h!]\small
\centering
\caption{Comparison between various proposed methods}\label{comparison2}
\begin{tabular}{|c||c|c||c|c||c|c||c|c||c|c|}
	\hline
	
\texttt{Experiment} & \multicolumn{2}{c||}{\texttt{Problem property}} & \multicolumn{2}{c||}{\texttt{Method~H}} & \multicolumn{2}{c||}{\texttt{Method~M}} & \multicolumn{2}{c||}{\texttt{Method~K}} & \multicolumn{2}{c|}{\texttt{Method~F}}\\
\cline{2-11}
	\texttt{number} & \multicolumn{2}{c||}{size} & nre & $k$ & nre & $k$ & nre & $k$ & nre & $k$ \\
	\cline{2-3}
	 in~\cite{ChuLiuMehrmann2007} & $\operatorname{cond}(V)$ & $\operatorname{cond}(V_P)$ & garp & time & garp & time & garp & time & garp & time \\
	\hline
	\hline
12 & \multicolumn{2}{c||}{2} & 7.96e-16 & 1 & 3.22e-16 &  -  & 1.53e-15 & 1 & 3.90e-11 & 2\\
\cline{2-3}
 & 1.42e+03 & 1.01e+00 & 8.94e-16 & 2.20e-02 & 3.74e-16 & 7.50e-03 & 2.29e-15 & 2.82e-02 & 6.56e-11 & 2.95e-02\\
\hline
\hline
13 & \multicolumn{2}{c||}{2} & 9.41e-09 & 1 & 2.23e-09 &  -  & 6.99e-16 & 1 &   NaN &  * \\
\cline{2-3}
 & 2.42e+00 & 1.01e+00 & 3.01e-10 & 1.69e-02 & 7.20e-11 & 8.29e-03 & 1.15e-15 & 2.28e-02 &   NaN &  * \\
\hline
\hline
14 & \multicolumn{2}{c||}{2} & 1.65e-15 & 1 & 2.69e-16 &  -  & 9.49e-16 & 1 & 4.37e-15 & 3\\
\cline{2-3}
 & 1.01e+00 & 1.01e+00 & 1.92e-15 & 1.65e-02 & 3.14e-16 & 7.24e-03 & 1.09e-15 & 2.29e-02 & 5.09e-15 & 2.71e-02\\
\hline
\hline
15 & \multicolumn{2}{c||}{2} & 4.72e-11 & 1 & 3.36e-12 &  -  & 2.92e-15 & 1 &   NaN &  * \\
\cline{2-3}
 & 1.01e+00 & 1.29e+00 & 4.72e-11 & 1.67e-02 & 3.36e-12 & 8.09e-03 & 2.39e-15 & 2.27e-02 &   NaN &  * \\
\hline
\hline
16 & \multicolumn{2}{c||}{2} &   NaN &  *  & 5.87e-10 &  -  & 9.38e-12 & 1 &   NaN &  * \\
\cline{2-3}
 & 1.00e+00 & 1.29e+00 &   NaN &  *  & 5.87e-10 & 8.07e-03 & 5.63e-12 & 2.15e-02 &   NaN &  * \\
\hline
\hline
17 & \multicolumn{2}{c||}{2} & 2.42e-15 & 1 & 2.23e-16 &  -  & 4.80e-15 & 1 & 1.25e-14 & 5\\
\cline{2-3}
 & 1.01e+00 & 2.62e+00 & 2.69e-15 & 2.23e-02 & 2.23e-16 & 9.29e-03 & 4.98e-15 & 2.83e-02 & 1.35e-14 & 3.92e-02\\
\hline
\hline
18 & \multicolumn{2}{c||}{2} &   NaN &  *  &   NaN &  -  &   NaN &  *  &   NaN &  * \\
\cline{2-3}
 & 1.01e+00 & 2.62e+00 &   NaN &  *  &   NaN &  *  &   NaN &  *  &   NaN &  * \\
\hline
\hline
19 & \multicolumn{2}{c||}{3} & 3.53e-15 & 1 & 2.73e-16 &  -  & 3.67e-15 & 1 & 6.64e-15 & 3\\
\cline{2-3}
 & 1.01e+00 & 1.01e+00 & 1.20e-14 & 1.98e-02 & 8.76e-16 & 7.88e-03 & 1.19e-14 & 2.29e-02 & 2.16e-14 & 2.72e-02\\
\hline
\hline
20 & \multicolumn{2}{c||}{3} & 9.95e-05 & 3 & 3.87e-05 &  -  & 4.77e-15 & 1 & 1.73e-14 & 3\\
\cline{2-3}
 & 1.01e+00 & 1.00e+00 & 1.18e-04 & 2.78e-02 & 4.53e-05 & 8.36e-03 & 6.08e-12 & 2.30e-02 & 2.08e-14 & 2.70e-02\\
\hline
\hline
21 & \multicolumn{2}{c||}{4} & 1.29e-14 & 1 & 4.76e-15 &  -  & 1.26e-13 & 1 & 3.81e-13 & 11\\
\cline{2-3}
 & 9.01e+00 & 3.58e+00 & 1.73e-14 & 2.25e-02 & 6.41e-15 & 8.79e-03 & 1.39e-13 & 2.92e-02 & 4.32e-13 & 5.84e-02\\
\hline
\hline
22 & \multicolumn{2}{c||}{4} & 7.28e-05 & 2 & 3.52e-06 &  -  & 1.70e-04 & 1 &   NaN &  * \\
\cline{2-3}
 & 1.22e+01 & 5.94e+00 & 4.81e-06 & 3.02e-02 & 2.96e-07 & 1.04e-02 & 8.34e-06 & 2.90e-02 &   NaN &  * \\
\hline\hline

\end{tabular} 
\end{table}
\end{landscape}
\egroup
\newpage

\bgroup
\def\arraystretch{1.1}
\begin{landscape}
\begin{table}[h!]\small
\centering
\caption{Comparison between various proposed methods}\label{comparison3}
\begin{tabular}{|c||c|c||c|c||c|c||c|c||c|c|}
	\hline
	
\texttt{Experiment} & \multicolumn{2}{c||}{\texttt{Problem property}} & \multicolumn{2}{c||}{\texttt{Method~H}} & \multicolumn{2}{c||}{\texttt{Method~M}} & \multicolumn{2}{c||}{\texttt{Method~K}} & \multicolumn{2}{c|}{\texttt{Method~F}}\\

\cline{2-11}
	\texttt{number} & \multicolumn{2}{c||}{size} & nre & $k$ & nre & $k$ & nre & $k$ & nre & $k$ \\
	\cline{2-3}
	 in~\cite{ChuLiuMehrmann2007} & $\operatorname{cond}(V)$ & $\operatorname{cond}(V_P)$ & garp & time & garp & time & garp & time & garp & time \\
	
	\hline
	\hline
23 & \multicolumn{2}{c||}{4} & 3.25e-14 & 1 & 1.78e-15 &  -  & 3.43e-14 & 1 & 1.30e-13 & 11\\
\cline{2-3}
 & 1.43e+01 & 1.77e+00 & 3.00e-14 & 2.23e-02 & 1.71e-15 & 7.96e-03 & 4.61e-14 & 2.92e-02 & 1.73e-13 & 5.84e-02\\
\hline
\hline
24 & \multicolumn{2}{c||}{4} &   NaN &  *  &   NaN &  -  &   NaN &  *  &   NaN &  * \\
\cline{2-3}
 & 1.74e+00 & 1.74e+00 &   NaN &  *  &   NaN &  *  &   NaN &  *  &   NaN &  * \\
\hline
\hline
25 & \multicolumn{2}{c||}{77} & 4.08e-12 & 1 & 3.66e-13 &  -  & 4.70e-11 & 1 & 3.16e-10 & 12\\
\cline{2-3}
 & 4.98e+01 & 1.87e+01 & 3.50e-11 & 2.11e-01 & 3.14e-12 & 1.13e-01 & 2.64e-10 & 3.03e-01 & 1.63e-09 & 4.34e-01\\
\hline
\hline
26 & \multicolumn{2}{c||}{237} & 4.75e-11 & 1 & 4.35e-12 &  -  & 2.27e-09 & 1 & 1.26e-08 & 17\\
\cline{2-3}
 & 2.41e+02 & 8.93e+01 & 8.89e-10 & 3.05e+00 & 8.21e-11 & 1.53e+00 & 1.41e-08 & 4.56e+00 & 7.09e-08 & 6.79e+00\\
\hline
\hline
27 & \multicolumn{2}{c||}{397} &   NaN &  *  & 6.71e-12 &  -  & 8.73e-09 & 1 & 6.69e-08 & 19\\
\cline{2-3}
 & 1.31e+02 & 4.84e+01 &   NaN &  *  & 2.27e-10 & 5.26e+00 & 5.50e-08 & 1.82e+01 & 3.87e-07 & 2.87e+01\\
\hline
\hline
28 & \multicolumn{2}{c||}{8} & 4.35e-15 & 1 & 1.67e-15 &  -  & 4.35e-15 & 1 & 1.30e-14 & 4\\
\cline{2-3}
 & 1.01e+00 & 1.01e+00 & 8.95e-15 & 1.74e-02 & 3.44e-15 & 7.72e-03 & 8.95e-15 & 2.37e-02 & 2.66e-14 & 3.04e-02\\
\hline
\hline
29 & \multicolumn{2}{c||}{64} & 4.12e-13 & 1 & 4.99e-14 &  -  & 4.12e-13 & 1 & 7.12e-13 & 4\\
\cline{2-3}
 & 1.01e+00 & 1.01e+00 & 1.97e-07 & 4.56e-02 & 2.38e-08 & 3.63e-02 & 1.97e-07 & 7.01e-02 & 3.40e-07 & 8.96e-02\\
\hline
\hline
30 & \multicolumn{2}{c||}{21} &   NaN &  *  &   NaN &  -  & 3.90e-04 & 1 &   NaN & 38\\
\cline{2-3}
 & 2.42e+09 & 2.78e+00 &   NaN &  *  &   NaN &  *  & 3.79e-04 & 4.34e-02 &   NaN & 2.08e-01\\
\hline
\hline
31 & \multicolumn{2}{c||}{21} &   NaN &  *  &   NaN &  -  &   NaN & 1 &   NaN &  * \\
\cline{2-3}
 & 2.42e+09 & 2.88e+02 &   NaN &  *  &   NaN &  *  &   NaN & 5.06e-02 &   NaN &  * \\
\hline
\hline
32 & \multicolumn{2}{c||}{100} & 6.57e-12 & 1 & 1.13e-12 &  -  & 6.57e-12 & 1 &   NaN &  * \\
\cline{2-3}
 & 1.01e+00 & 1.01e+00 & 2.27e-11 & 1.52e-01 & 3.90e-12 & 1.36e-01 & 2.27e-11 & 2.33e-01 &   NaN &  * \\
\hline
\hline
33 & \multicolumn{2}{c||}{60} & 2.04e-14 & 1 & 2.67e-13 &  -  & 2.77e-10 & 1 &   NaN &  * \\
\cline{2-3}
 & 1.91e+01 & 1.55e+01 & 4.67e-14 & 1.12e-01 & 6.10e-13 & 4.97e-02 & 4.17e-10 & 1.58e-01 &   NaN &  * \\
\hline
\end{tabular} 
\end{table}
\end{landscape}
\egroup

\pgfplotstableread{results.table}{\results}
\begin{figure}[h!]
\centering
\begin{tikzpicture}
\begin{loglogaxis}[ymin=1e-18,ymax=1e0, xlabel=$\operatorname{cond}(V)$, ylabel=$nre$]
\addplot+[only marks] table[x=condV,y=hashemi.nre] \results;
\end{loglogaxis}
\end{tikzpicture}
\caption{nre of Method~H vs. $\operatorname{cond}(V)$} \label{fig:condH}
\end{figure}

\pgfplotstableread{results.table}{\results}
\begin{figure}
\centering
\begin{tikzpicture}
\begin{loglogaxis}[ymin=1e-18,ymax=1e0,xlabel=$\operatorname{cond}(V_P)$, ylabel=$nre$]
\addplot+[only marks] table[x=condVp,y=miyajima.nre] \results;
\end{loglogaxis}
\end{tikzpicture}
\caption{nre of Method~M vs. $\operatorname{cond}(V_P)$} \label{fig:condK}
\end{figure}

\pgfplotstableread{results.table}{\results}
\begin{figure}
\centering
\begin{tikzpicture}
\begin{loglogaxis}[ymin=1e-18,ymax=1e0,xlabel=$\operatorname{cond}(V_P)$, ylabel=$nre$]
\addplot+[only marks] table[x=condV,y=our.nre] \results;
\end{loglogaxis}
\end{tikzpicture}
\caption{nre of Method~K vs. $\operatorname{cond}(V)$} \label{fig:condM}
\end{figure}

\pgfplotstableread{results.table}{\results}
\begin{figure}
\centering
\begin{tikzpicture}
\begin{loglogaxis}[ymin=1e-18,ymax=1e0,xlabel=$\operatorname{cond}(V_P)$, ylabel=$nre$]
\addplot+[only marks] table[x=condV,y=fixed.nre] \results;
\end{loglogaxis}
\end{tikzpicture}
\caption{nre of Method~F vs. $\operatorname{cond}(V)$} \label{fig:condF}
\end{figure}

\pgfplotstableread{results.table}{\results}
\begin{figure}
\centering
\begin{tikzpicture}
\begin{loglogaxis}[xlabel=$\operatorname{cond}(V)$, ylabel=$\operatorname{cond}(V_P)$, axis equal, enlargelimits = false]
\addplot+[only marks] table[x=condV,y=condVp] \results;
\addplot+[mark = none, domain = 1e-2:1e12] {x};
\end{loglogaxis}
\end{tikzpicture}
\caption{$\operatorname{cond}(V_P)$ vs. $\operatorname{cond}(V)$. Most of the points lie below the axes bisector (drawn in red), which means that the condition number of $V_P$ is generally lower than the one of $V$.} \label{fig:condcond}
\end{figure}

\pgfplotstableread{results.table}{\results}
\begin{landscape}
\begin{figure}[h!]
\centering
\begin{tikzpicture}
\begin{axis}[ybar=1pt, ymode=log,log origin=infty, width=1.6\textwidth, height=\textheight, bar width = 2pt,
   xtick={1,...,33},
   xmin=0.5,
   xmax=33.5, ymin = 1e-18, ymax=1e-2, legend pos = north west, xlabel = {experiment number}, ylabel={$garp$}]
\addplot table[x=experiment,y=hashemi.garp] \results; \addlegendentry{Method~H}
\addplot table[x=experiment,y=miyajima.garp] \results; \addlegendentry{Method~M}
\addplot table[x=experiment,y=our.garp] \results; \addlegendentry{Method~K} 
\addplot table[x=experiment,y=fixed.garp] \results; \addlegendentry{Method~F}

\end{axis}
\end{tikzpicture}
\caption{Values of garp for each experiment number. A full bar means that the method failed to compute an enclosure.} \label{fig:garps}
\end{figure}
\end{landscape}

Table~\ref{stabilizingproperty} reports the result of checking the stabilization property; a plus sign means that the property is verified, a minus sign means failure to verify the property, and a star means that the algorithm had already failed to compute an inclusion interval. As one can see, there is only a very limited number of cases in which the stabilization procedure fails. 

\bgroup
\def\arraystretch{1.1}
\begin{table}[h!]
\centering
\caption{Results for stabilizing property in all methods}\label{stabilizingproperty}
\begin{tabular}{|c||c|c|c|c|}
	\hline
	
\texttt{Experiment number} & \texttt{Method~H} & \texttt{Method~M} & \texttt{Method~K} & \texttt{Method~F}\\

\cline{1-5}
1 &  *  &  *  &  *  &  + \\
\hline
2 &  +  &  +  &  +  &  + \\
\hline
3 &  +  &  +  &  +  &  + \\
\hline
4 &  +  &  +  &  +  &  + \\
\hline
5 &  +  &  +  &  +  &  + \\
\hline
6 &  +  &  +  &  +  &  * \\
\hline
7 &  +  &  +  &  +  &  + \\
\hline
8 &  +  &  +  &  +  &  + \\
\hline
9 &  +  &  +  &  +  &  + \\
\hline
10 &  *  &  +  &  -  &  - \\
\hline
11 &  +  &  +  &  +  &  + \\
\hline
12 &  +  &  +  &  +  &  + \\
\hline
13 &  +  &  +  &  +  &  * \\
\hline
14 &  +  &  +  &  +  &  + \\
\hline
15 &  +  &  +  &  +  &  * \\
\hline
16 &  *  &  +  &  +  &  * \\
\hline
17 &  +  &  +  &  +  &  + \\
\hline
18 &  *  &  *  &  *  &  * \\
\hline
19 &  +  &  +  &  +  &  + \\
\hline
20 &  +  &  +  &  +  &  + \\
\hline
21 &  +  &  +  &  +  &  + \\
\hline
22 &  +  &  -  &  +  &  * \\
\hline
23 &  +  &  +  &  +  &  + \\
\hline
24 &  *  &  *  &  *  &  * \\
\hline
25 &  +  &  +  &  +  &  + \\
\hline
26 &  +  &  +  &  +  &  + \\
\hline
27 &  *  &  +  &  +  &  + \\
\hline
28 &  +  &  +  &  +  &  + \\
\hline
29 &  +  &  +  &  +  &  + \\
\hline
30 &  *  &  *  &  -  &  - \\
\hline
31 &  *  &  *  &  -  &  * \\
\hline
32 &  +  &  +  &  +  &  * \\
\hline
33 &  +  &  +  &  +  &  * \\
\hline
\end{tabular} 
\end{table}
\egroup

Remarks are in order on some of the problems.
\begin{description}
	\item[Experiment 1] This is an example of the phenomenon described in the beginning of Section~\ref{directfixedpoint}: the closed-loop matrix $A-GX_s$ is not diagonalizable. The coefficient matrices for this example are 
\[A=\begin{bmatrix}
	0 & 1\\ 0 & 0
\end{bmatrix}
, G=\begin{bmatrix}
	0 & 0\\ 0 & 1
\end{bmatrix}
\quad \textrm{and} \quad
Q=\begin{bmatrix}
	1 & 0\\ 0 & 2
\end{bmatrix}.
\]
The exact value of the closed loop matrix for the original and transformed equations are respectively
\[
A-GX_s = \begin{bmatrix}
0 & 1\\ -1 & -2
\end{bmatrix}
\quad \text{and} \quad A_P-G_PY_s = 
\begin{bmatrix}
-2/3 & 1/3 \\ -1/3 & -4/3
\end{bmatrix},
\]
both with a double (defective) eigenvalue in $-1$. The approximation $\check{X}$ computed with the Schur method satisfies $\norm{X_s-\check{X}}=1.68e-15$. The matrix $A-G\check{X}$ is diagonalizable with two very close eigenvalues. Hence, the computed condition numbers of $V$ and $V_P$ are both large, and the first three algorithms, which are based on the diagonalization of an approximation of $A-GX_s$, fail. On the other hand, the fixed-point algorithm does not encounter any difficulty and returns a tight interval $\X$ containing the stabilizing solution. The condition number of the eigenvector matrix of $\mi(A-G\X)$ is $7.75e7$, but the verification with Algorithm~\ref{algo:stabilizing} succeeds nevertheless.
\item[Experiments 30 and 31] In Method~F for problem 30 and Method~K for problem 31, we report termination in a finite number of iterations, but NaN for the error. In these problems, the verification algorithm succeeds for the Riccati equation~\eqref{carep}, but the resulting interval $\Y$ cannot be converted into a solution interval $\X$ for~\eqref{care} using Lemma~\ref{changecare}, because the interval matrix $\mathbf{U}_1$ computed as described in Section~\ref{verifyingdifferent} contains singular matrices, hence the solution set $\X$ is unbounded. So the method fails to produce a solution enclosure for~\eqref{care}.
\end{description}

Another interesting observation is that Method~K needs only one iteration in all experiments when it works apart from one case (Experiment 6), i.e., the crucial relation \eqref{computationalexistence} is already fulfilled for $k=1$ in all the other cases.

When they are successful, Methods~H and~K are comparable with respect to execution time as well as with respect to the quality of the enclosure. However, there are cases in which Method~H is not successful, and this comprises cases with small dimensions (e.g. 2 in Example 10)
as well as cases with large dimensions (e.g. 397 in Example 27). 

Method~M is significantly faster than the other algorithms. We remark, though, that MATLAB, being an interpreted language, is often not reliable in evaluating computational times. In particular, INTLAB is implemented entirely in MATLAB code, and its running time does not always match the theoretical complexity, especially when dealing with small matrices. For Methods K and F, which rely on Algorithm~\ref{algo:CAREverification}, another consideration is that the computation of the matrix $P$ using the toolbox~\cite{pgdoubling2012} requires in its default implementation a tight double \texttt{for} loop on the matrix entries. MATLAB executes loops of this kind much more slowly than operations on full matrices; hence comparing running times may show a larger discrepancy than the actual difference in performance between the algorithms.

Methods~K and~M are the most reliable, and fail only on very ill-conditioned examples. Interestingly, the errors obtained by the two approaches differ by orders of magnitude on several problems, in both directions; there are also examples in which either one fails while the other succeeds. So there is no clear winner among the two.

Method~F has the largest number of failures. Despite that, it is useful in special cases (such as in Experiment~1) in which the other algorithms have difficulties, particularly when the closed-loop matrix is not diagonalizable.

In many of the examples the performance of the methods based on diagonalizing the closed-loop matrix is (loosely) related to the condition number of $V$ (or $V_P$, when it is used). To visualize this relationship, we show in Figures~\ref{fig:condH}--\ref{fig:condF} scatter plots of the obtained accuracy vs. the value of this condition number in the various examples. 
When the magnitude of $\operatorname{cond}(V)$ is moderate, $\operatorname{cond}(V_P)$ has typically the same order of magnitude, but in some cases when $\operatorname{cond}(V)$ is large $\operatorname{cond}(V_P)$ seems to be considerably lower, as shown in Figure~\ref{fig:condcond}. There is only one case in which $\operatorname{cond}(V_P)$ is considerably larger than $\operatorname{cond}(V)$, that is, Experiment 9 (1.22 vs. 69.8). This shows experimentally that switching from the formulation~\eqref{care} to~\eqref{carep} is beneficial.

\subsection{Experiments with varying sizes}
In view of the fact that the three Methods H, K and~F are iterative taking an unspecified number of steps, and that the last two require a factorization which may require $\mathcal{O}(n^3\log_\tau n)$ in the worst case, when $n$ is the size of $X$ in~\eqref{care}, the reader may wonder how the time taken by the various algorithm scales with the dimension $n$ in practice. We have tested all algorithms on~\cite[Problem~15]{carex1995}, which is a problem designed explicitly to check how Riccati solvers scale with the dimension of the equation. We have generated the test problem in 30 different sizes equally distributed in logarithmic scale between $10$ and $1000$, and we have tested the four algorithms on these examples. The resulting CPU times are reported in Figure~\ref{fig:scaled}.

\pgfplotstableread{results_scaled.table}{\resultsscaled}
\begin{figure}
\centering
\begin{tikzpicture}
\begin{loglogaxis}[width=\textwidth, xlabel=$n$, ylabel={CPU time}, legend pos = north west]
\addplot table[x=n,y=hashemi.time] \resultsscaled; \addlegendentry{Method H}
\addplot table[x=n,y=miyajima.time] \resultsscaled; \addlegendentry{Method M}
\addplot table[x=n,y=our.time] \resultsscaled; \addlegendentry{Method K}
\addplot table[x=n,y=fixed.time] \resultsscaled; \addlegendentry{Method F}
\addplot[domain=100:1000,dashed] {1e-5*x^3}; \addlegendentry{$f(x) = 3\cdot 10^{-5}x^3$}
\end{loglogaxis}
\end{tikzpicture}
\caption{CPU times for verification on a scaled version of Experiment 15 vs. dimension $n$.} \label{fig:scaled}
\end{figure}

Overall, the results shows that all methods scale essentially with $\mathcal{O}(n^3)$, and in particular that Methods~K and~F stay within a moderate factor of the time taken by Method M. In the two largest experiments $n=853, n=1000$, Method K is the only one to succeed: Method M fails, while Method F delivers a solution enclosure for which the stabilizing property cannot be proved. Method H fails for each $n\geq 204$. Verification of the stabilizing property succeeds in all other cases apart from the two mentioned above for Method F.

The MATLAB code used for the experiments is available online on~\url{https://bitbucket.org/fph/verifiedriccati}.


\section{Summary and Outlook}\label{summary}

We have introduced several improvements to the method in~\cite{Hashemi2012}, borrowing ideas from both the interval arithmetic and the matrix equations literature. The resulting method has been tested on several standard benchmark experiments, and is competitive with the one introduced in~\cite{Miyajima2015}, returning a smaller solution enclosure in several of the experiments. Moreover, the new fixed-point method described in Section~\ref{directfixedpoint} is a useful addition to the battery of existing verification methods; it is especially useful in the cases in which the closed-loop matrix is not diagonalizable.

There is no single algorithm that beats all the others on all the benchmark problems; hence it is important to have several methods available, each with its strengths and drawbacks. Overall, all but two of the problems in this challenging set of experiments could be verified with success.

A number of open problems remain: first of all reducing to zero the number of remaining failures in the methods. Of particular interest would be a method more effective than Method~F that does not rely on the closed loop matrix being diagonalizable. Other possible research lines are applying these approaches to discrete-time Riccati equations (\emph{DARE}) or more generally to non-symmetric algebraic Riccati equations (\emph{NARE}). 


\section{Acknowledgments}\label{acknowledgment}
The authors thank Prof.~Dr.~Wolfram Luther for providing them the technical report related to the reference~\cite{LutherOttenTraczinski1998} and also the paper~\cite{LutherOtten1999}.

T.~Haqiri acknowledges the support by the Ministry of Science, Research and Technology of the Islamic Republic of Iran for her abroad research scholarship.

F.~Poloni acknowledges the support of INDAM (Istituto Nazionale di Alta Matematica) and of the PRA 2014 project ``Mathematical models for complex networks and systems'' of the university of Pisa.


\bibliographystyle{plainurl}
\bibliography{intervals}
\end{document}